\newcommand{\R}{\mathbf{R}}
\newtheorem{assumption}{Assumption}
\newcommand{\reff}[1]{{\rm(\ref{#1})}}
\title{Some non monotone schemes for time dependent Hamilton-Jacobi-Bellman equations in stochastic control}
\titlerunning{Schemes for Hamilton-Jacobi-Bellman equations}
\author{Xavier Warin }
\institute{X. Warin \at EDF R\&D \& FiME, Laboratoire de Finance des March\'es de l'Energie (www.fime-lab.org)\\
           Tel: +33-1-47654184\\
           \email{xavier.warin@edf.fr}
           }
\begin{document}
\maketitle
\begin{abstract}
We introduce some approximation schemes for linear and fully non-linear diffusion equations of Bellman type.  Based on modified high order interpolators, the schemes proposed are not monotone but 
one can prove their convergence to the viscosity solution of the problem.  
Some of these schemes are related to a scheme previously proposed without proof of convergence.
Effective implementation of these schemes  in a parallel framework is discussed. They are  extensively 
tested on some simple test case, and on some  difficult ones where theoritical results of convergence  are not available. 
\keywords{ Hamilton-Jacobi-Bellman equations, stochastic control, numerical methods, semi-lagrangian}
\subclass{MSC 49L20, MSC 65N12}
\end{abstract}

\section{Introduction}
\label{intro}
We are interested in a classical stochastic control problem whose value function is solution of the following Hamilton Jacobi equations:
\begin{eqnarray}
\frac{\partial v}{\partial t}(t,x) &  -&  \inf_{a_t \in \mathop{A}} \left( \frac{1}{2} tr(\sigma_{a_t}(t,x)\sigma_{a_t}(t,x)^T D^2 v(t,x)) + b_{a_t}(t,x) D v(t,x) \right.  \nonumber  \\
  &  & \left .  + c_{a_t}(t,x) v(t,x)+ f_{a_t}(t,x) \vphantom{\int_t} \right)  =  0  \mbox{ in  } \mathop{Q} \nonumber  \\
v(0,x)& = & g(x) \mbox{ in } \R^d
\label{hjb}
\end{eqnarray}
where $Q := (0,T] \times \R^d$,  $\mathop{A}$ is a complete metric space.
$\sigma_{a_t}(t,x)$  is a $d \times q$ matrix   and so  $\sigma_{a_t}(t,x) \sigma_{a_t}(t,x)^T$ is a $d \times d$ symmetric matrix, the  $b_{a_t}$ and $f_{a_t}$ coefficients are functions defined on  $Q$ with values ​​respectively in $\R^d$ and $R$.\\
Let's introduce an $\R^d$-valued controled process $X^{x,t}_s$ defined on a filtered probability space $\left(\Omega,\mathcal{F},\mathbb{F},\mathbb{P}\right)$ by
\begin{eqnarray}
dX^{x,t}_s & = & b_{a_s}(t,X^{x,t}_s) ds + \sigma_{a_s}(s,X^{x,t}_s) dW_s \nonumber \\
X^{x,t}_t &=& x \nonumber
\end{eqnarray}
where  $a =(a_s)$ is  a progressive process with values in  $\mathop{A}$.
This kind of problem arise when you want to minimize a cost function  $$J(t,x,a) =  \mathbb{E}[\int_{t}^T f_{a_s}(s,X^{x,t}_s) e^{\int_t^s c_{a_s}(u,X^{x,t}_u) du} ds + e^{\int_t^T c_{a_s}(u,X^{x,t}_u) du}  g(X^{x,t}_T)]$$ with respect to the control $a$. It is well known \cite{soner} that the optimal value $ \hat J(t,x) = \inf_{a} J(t,x,a)$ is a viscosity solution of equation \reff{hjb}.\\
Several approaches exist to solve this problem:
\begin{itemize}
\item The first approach is the generalized finite differences method developed by Bonnans Zidani \cite{bonnans1} where the derivatives are approximated  taking some  non  directly adjacent points. Directions are chosen such that the operator is consistent and it is monotone. Barles Souganidis framework \cite{barlessouganidis}  can be used to prove that the scheme converges to the viscosity solution of the problem.
\item The second is the semi-Lagrangian approach developed by Camilli Falcone for example  \cite{Camilli}, generalized by Munos Zidani \cite{Zidani} and studied in detail by Debrabant Jakobsen  \cite{Jakobsen}. In this approach, the scheme is discretized in time with a step $h$, the brownian motion is discretized taking two values ​​of the order of  $\sqrt{h}$.  The scheme still follows the Barles Souganidis framework.
\item The third approach is based on Monte Carlo techniques and the resolution of a Second Order Backward Stochastic Differential Equation. Initially
developed by Fahim, Touzi, Warin \cite{warin} for two particular schemes, it has been generalized for degenerated HJB equations by  Tan \cite{xiaolu}. The convergence  of the scheme to the viscosity solution is still given by the Barles Souganidis framework.
\end{itemize}
We will look at the use of semi-Lagrangian schemes for solving the control problem \reff{hjb}.
These schemes have been studied in detail recently by Debrabant and Jakobsen \cite{Jakobsen} but for a low degree interpolator (typically linear) that gives a monotone operator.
Under an assumption of CFL type, they show that the schemes are converging to the viscosity solution
of the problem and using the method of shaking coefficients \cite{Krilov}, they provide an estimate of the rate of convergence. Finally, they develop a higher order scheme but without proof of convergence to the viscosity solution of the problem. \\
In this note, we will look at non monotone approximations of higher degree.
We show  the convergence of such schemes to the viscosity solution of the problem and give an estimate of the error based on the fineness of the mesh.\\
The structure of the paper is as follows:
In the first part, we give the notations and some classical results of existence and uniqueness of the solution of \reff{hjb}.
After time discretization, we show for some general approximations the convergence of the discrete solution to the viscosity solution of the problem if we can solve the optimization problem obtained at each time step. \ \
We explain why the monotony is not necessary to obtain convergence toward the viscosity solution: a scheme converging under certain assumptions always converges to the viscosity solution.\\
In the second part we develop several Lagrange interpolators, spline interpolators, and approximations based on Bernstein polynomials converging. \\
The last part, the techniques to properly treat the boundary conditions and parallelization methods are developed so that problems of dimension greater than 2 can be tackled. On different test cases, we show that the developed schemes are effective even in cases where the theory gives no evidence of convergence (unbounded control).

\section{Notation and regularity results}
\label{notation}
We denote by $\wedge$ the minimum and $ \vee$ the maximum.
We denote by $|\quad | $ the Euclidean norm of a vector. 
For a bounded function $w$, we set
\begin{eqnarray}
 | w|_0  = \sup_{(t,x) \in Q} | w(t,x)|, &  \quad  & [w]_1 = \sup_{(s,x) \ne (t,y)} \frac{ |w(s,x) -w(t,y)|}{|x-y| + |t-s|^{\frac{1}{2}}} \nonumber
\end{eqnarray}
and  $|w|_1 = | w|_0 +[w]_1 $. $C_1(Q)$ will stand for the space of functions  with a finite  $|\quad |_1$ norm.\\
For  $t$ given, we denote
\begin{eqnarray}
||w(t,.) ||_{\infty} = \sup_{x \in \R^d } | w(t,x)| \nonumber
\end{eqnarray}
We use the classical assumption on the data of \reff{hjb} for a given $\hat K$: 
\begin{eqnarray}
 \sup_a |g|_1 + |\sigma_a|_1 + |b_a|_1 +  |f_a|_1  + | c_a|_1 \le \hat K
\label{coeff}
\end{eqnarray}

The following proposition \cite{Jakobsen} gives us the existence of a solution in the space of bounded Lipschitz functions
\begin{proposition}
If the coefficients of the equation  \reff{hjb} satisfy \reff{coeff}, there exists a unique viscosity solution of the equation \reff{hjb}  belonging to  $C_1(Q)$.  If $ u_1 $ and $u_2$ are  respectively sub and supersolution of equation  \reff{hjb} satisfying  $u_1(0,.) \le u_2(0,.)$  then  $u_1 \le u_2$.
\end{proposition}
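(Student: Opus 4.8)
The proposition is a classical result in the theory of viscosity solutions for second-order parabolic equations, and the plan is to reduce everything to a comparison principle. The comparison statement (if $u_1$ is a subsolution, $u_2$ a supersolution, and $u_1(0,\cdot)\le u_2(0,\cdot)$, then $u_1\le u_2$) immediately yields uniqueness, since two solutions are each simultaneously sub- and supersolution and hence dominate one another. Existence can then be treated separately, either by Perron's method or by exhibiting the value function $\hat J$ of the control problem as a solution and proving the required regularity.

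For the comparison principle I would argue by contradiction, assuming $M:=\sup_Q(u_1-u_2)>0$. Since the spatial domain is all of $\R^d$, I would first double the variables and add a penalization at infinity, studying
$$\Phi(t,x,s,y)=u_1(t,x)-u_2(s,y)-\frac{|x-y|^2}{2\varepsilon}-\frac{|t-s|^2}{2\alpha}-\delta(|x|^2+|y|^2)-\eta t,$$
where $\varepsilon,\alpha,\delta,\eta>0$ are small parameters. Boundedness of $u_1,u_2$ together with the quadratic penalty $\delta(|x|^2+|y|^2)$ guarantees that $\Phi$ attains its maximum at some point $(\bar t,\bar x,\bar s,\bar y)$, and the $\eta t$ term lets one push the analysis away from the initial slice $t=0$, where $u_1(0,\cdot)\le u_2(0,\cdot)$ is known. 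Standard penalization estimates then show $|\bar x-\bar y|^2/\varepsilon\to0$ and $\delta(|\bar x|^2+|\bar y|^2)\to0$ as the parameters are sent to their limits.

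The heart of the proof is the Crandall--Ishii maximum principle for semicontinuous functions (the theorem on sums), which produces matrices $X,Y$ and the time-derivative contributions attached to the second-order sub- and superjets of $u_1,u_2$ at the maximum point, subject to the matrix inequality that controls $X-Y$ by the Hessian of the penalization. Writing the subsolution inequality for $u_1$ and the supersolution inequality for $u_2$ and subtracting, the infima over $a\in A$ are handled by the elementary bound $\inf_a\phi_a-\inf_a\psi_a\le\sup_a(\phi_a-\psi_a)$, reducing the comparison to a single uniform estimate over controls. The Lipschitz assumption \reff{coeff} then controls each term: the trace term $\tfrac12\mathrm{tr}(\sigma_a\sigma_a^T X-\sigma_a\sigma_a^T Y)$ is bounded through the matrix inequality and the estimate $|\sigma_a(\bar t,\bar x)-\sigma_a(\bar s,\bar y)|\le \hat K(|\bar x-\bar y|+|\bar t-\bar s|^{1/2})$, the first- and zeroth-order terms are controlled linearly in $|\bar x-\bar y|$ and $|\bar t-\bar s|$ (if necessary the substitution $v\mapsto e^{\lambda t}v$ normalizes the sign of the zeroth-order coefficient $c_a$), and the source terms $f_a$ likewise. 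Collecting these and letting $\varepsilon,\alpha,\delta,\eta\to0$ in the right order drives the right-hand side to zero while leaving a strictly positive contribution from $M>0$, which is the contradiction.

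For existence I would invoke Perron's method: once comparison holds, the supremum of all subsolutions lying below a fixed supersolution is a viscosity solution, and the barriers needed at $t=0$ are furnished by $g$ together with the bounds in \reff{coeff}. Alternatively, and more in the spirit of the control problem, one takes $\hat J$, which is a viscosity solution by \cite{soner}, and establishes membership in $C_1(Q)$ directly: the bound $|\hat J|_0\le C$ and the Lipschitz/half-Hölder estimate $[\hat J]_1\le C$ follow from standard moment and stability estimates on the controlled diffusion $X^{x,t}_s$ (Gronwall and Burkholder--Davis--Gundy inequalities), using \reff{coeff} uniformly in the control. I expect the main obstacle to be the comparison argument on the unbounded domain $\R^d$: making the penalization at infinity cooperate with the second-order matrix inequality from the Crandall--Ishii lemma, so that the trace term is genuinely controlled as all parameters tend to their limits, is the delicate point; the time variable and the sup/inf structure are handled by routine adaptations of the parabolic comparison machinery.
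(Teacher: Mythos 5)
The paper offers no proof of this proposition at all: it is imported verbatim from Debrabant--Jakobsen \cite{Jakobsen}, where it rests on exactly the classical machinery you outline (comparison by doubling of variables with a quadratic penalization at infinity and the Crandall--Ishii theorem on sums, the normalization $v\mapsto e^{\lambda t}v$ for the zeroth-order term, and existence plus $C_1(Q)$ regularity of the value function via standard moment estimates on the controlled diffusion under \reff{coeff}). Your sketch is therefore the standard argument behind the citation and is sound as a plan, with the genuinely delicate points (penalization versus the trace term, handling of the initial slice) correctly identified rather than glossed over.
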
 
A spatial discretization length of the problem $\Delta x$  being given, thereafter  $(i_1 \Delta x ,..,i_d \Delta x)$ with $\bar i = (i_1 , ..., i_d) \in \mathbf{ Z}^d$  will correspond to the coordinates of a mesh   $M_{\bar i}$ defining a hyper-cube in dimension $d$.
For an interpolation grid $(\xi_{i})_{i=0,..N} \in [-1,1]^N$, and for a mesh $\bar i$, the point $y_{\bar i, \tilde j}$ with $\tilde j = (j_1, .., j_d) \in [0,N]^d$  will have the coordinate
$(\Delta x (i_1 + 0.5 (1+\xi_{j_1})) ,..,\Delta x (i_d + 0.5 (1+\xi_{j_d}))$.
We denote  $(y_{\bar i, \tilde j})_{\bar i, \tilde j}$ the set of all the grids points on the whole domain.\\
We notice that for regular mesh with constant volume  $\Delta x^d$, we have the following relation for all $x \in \R^d$:
\begin{eqnarray}
\min_{\bar i, \tilde j} | x- y_{\bar i, \tilde j}| \le \Delta x
\label{MaxDist}
\end{eqnarray}
Finally, from one line to the other constants $C$ may be changed.
 
\section{General discretization}
\label{generalDis}
The equation \reff{hjb} is discretized in time by the scheme proposed by Camilli Falcone \cite{Camilli}  for a time discretization $h$.
\begin{eqnarray}
v_h(t+h,x) & = &  \inf_{a \in \mathop{A}}  \left[ \sum_{i=1}^q \frac{1}{2q} (v_h(t, \phi^{+}_{a, h, i}(t,x)) +  v_h(t, \phi^{-}_{a, h, i}(t,x)) )
  \right. \nonumber \\ 
&  & \left. +  f_a(t,x) h + c_a(t,x) h v_h(t,x) \vphantom{\int_t}  \right]  \nonumber \\
    & := & v_h(t,x)+ \inf_{a \in \mathop{A}}   L_{a,h}(v_h)(t,x)   \label{hjbCamilli}  
\end{eqnarray}

with
\begin{eqnarray}
L_{a,h}(v_h)(t,x) & = & \sum_{i=1}^q \frac{1}{2q} (v_h(t, \phi^{+}_{a, h, i}(t,x)) + v_h(t, \phi^{-}_{a, h, i}(t,x)) -  2 v_h(t,x))  \nonumber \\
              &    &  + h c_a(t,x) v_h(t,x)+ h f_a(t,x) \nonumber \\
\phi^{+}_{a, h, i}(t,x) & =& x +b_a(t,x) h + (\sigma_a)_i(t,x) \sqrt{h q}  \nonumber\\
\phi^{-}_{a, h, i}(t,x) & =& x +b_a(t,x) h - (\sigma_a)_i(t,x) \sqrt{h q}  \nonumber
\end{eqnarray}
where  $(\sigma_a)_i$ is the $i$-th column of $\sigma_a$. We note that it is also possible to choose other types of discretization in the same style as those defined
in \cite{Zidani}.\\
In order to define the solution  at each date,  a condition on the value chosen for $v_h$ between $0$ and $h$ is required. We choose a time linear interpolation once the solution has been calculated at date  $h$:
\begin{eqnarray}
v_h(t,x) = (1- \frac{t}{h})  g(x)+  \frac{t}{h} v_h(h,x), \forall t \in [0,h]. 
\label{hjbCamilli1}
\end{eqnarray}
We first recall the following result :
\begin{proposition}
\label{converDisH}
  Under the condition on the coefficients  given by equation \reff{coeff}, the solution $v_h$ of equations \reff{hjbCamilli} and \reff{hjbCamilli1} is uniquely defined and belongs to   $C_1(Q)$. We check that if $ h \le (16 \sup_a \left \{ |\sigma_a|_1^2 + | b_a|_1^2 +1 \right \} \wedge 2 \sup_a |c_a|_0)^{-1} $, there exists $C$ such that 
\begin{eqnarray}
| v -v_h |_0 \le C h^{\frac{1}{4}} 
\end{eqnarray}
Moreover, there exists  $C$ independent of $h$ such that 
\begin{eqnarray}
\label{LipschitVh}
| v_h|_0 & \le & C \\
|v_h(t,x)-v_h(t,y)| & \le & C |x-y|, \forall (x,y) \in Q^2
\end{eqnarray}
\end{proposition}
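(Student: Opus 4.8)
The plan is to read \reff{hjbCamilli}--\reff{hjbCamilli1} as the dynamic programming relation of a controlled random walk and to obtain every assertion by induction on the time layers $t=0,h,2h,\dots$, each inductive step being a one-step comparison closed by a discrete Gronwall inequality. The $2q$ evaluation points $\phi^{\pm}_{a,h,i}(t,x)$ carry equal nonnegative weights $\frac{1}{2q}$ summing to one, so $\sum_i\frac{1}{2q}(w(\phi^+_{a,h,i})+w(\phi^-_{a,h,i}))$ is the expectation of $w$ under a one-step law with mean $x+b_a(t,x)h$ and spread of order $\sqrt h$. Well-posedness is then immediate: for bounded $w$ the bracket in \reff{hjbCamilli} is finite and bounded below uniformly in $a\in A$, so the infimum defines $v_h(t+h,\cdot)$ at every point, \reff{hjbCamilli1} fixes the strip $[0,h]$, and the scheme being explicit in time gives uniqueness. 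Setting $M(t)=\|v_h(t,\cdot)\|_\infty$ and using that the diffusion weights sum to one with $|c_a|_0,|f_a|_0\le\hat K$, one step gives $M(t+h)\le M(t)(1+\hat K h)+\hat K h$, whence $M(t)\le e^{\hat K T}(|g|_0+\hat K T)$ and the uniform bound $|v_h|_0\le C$ of \reff{LipschitVh}.

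The Lipschitz bound in $x$ is the delicate point, and it is where the symmetric structure of the scheme is essential. Write $L(t)$ for the Lipschitz constant of $v_h(t,\cdot)$. A termwise estimate bounding $|v_h(t,\phi^{\pm}(x))-v_h(t,\phi^{\pm}(y))|$ by $L(t)\,|\phi^{\pm}(x)-\phi^{\pm}(y)|\le L(t)|x-y|(1+Ch+C\sqrt h)$ is fatal, since the spurious $\sqrt h$ factor compounds over the $T/h$ layers into $\exp(C/\sqrt h)$. The remedy is to estimate the averaged term in mean square: couple the two one-step positions $X=x+b_a(t,x)h+(\sigma_a)_i(t,x)\sqrt{hq}\,\epsilon$ and $Y=y+b_a(t,y)h+(\sigma_a)_i(t,y)\sqrt{hq}\,\epsilon$ with the same uniformly drawn index $i$ and sign $\epsilon=\pm1$. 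The fluctuation is mean zero, so the cross term vanishes and $\Esp{|X-Y|^2}\le|x-y|^2(1+Ch)$, the $\sqrt h$ fluctuation contributing only its variance $O(h)|x-y|^2$. Cauchy--Schwarz then gives $\Esp{|v_h(t,X)-v_h(t,Y)|}\le L(t)|x-y|(1+Ch)$ with no $\sqrt h$, and adding the Lipschitz contributions of $f_a$ and of $c_a v_h$ (the latter bounded through the $L^\infty$ estimate) yields $L(t+h)\le L(t)(1+Ch)+Ch$; Gronwall closes the induction with $L(t)\le C$. The assumption on $h$ is precisely what keeps the one-step factor $1+Ch$, with $C\sim\sup_a(|\sigma_a|_1^2+|b_a|_1^2+1)$, and the discount $|c_a|h\le\frac12$ under control. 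The $\frac12$-Hölder regularity in time demanded by $v_h\in C_1(Q)$ comes from the same representation: over $N=(s-t)/h$ layers the walk has mean-square displacement $O(Nh)=O(|s-t|)$, so Lipschitz-in-$x$ and bounded coefficients give $|v_h(s,x)-v_h(t,x)|\le C|s-t|^{1/2}$, while \reff{hjbCamilli1} controls the initial strip.

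There remains the rate $|v-v_h|_0\le Ch^{1/4}$, which I would establish by Krylov's method of shaking the coefficients together with the consistency of the scheme. The scheme is consistent with \reff{hjb}: Taylor expanding a smooth test function at the symmetric points $\phi^{\pm}$ reproduces $\partial_t v-\inf_a(\frac12 tr(\sigma_a\sigma_a^T D^2 v)+b_a Dv+c_a v+f_a)$ up to a truncation error governed by the higher derivatives of the test function. Because the Bellman operator $-\inf_a(\cdot)$ is convex, one inequality is the easy one: shaking the coefficients of \reff{hjb} on a scale $\varepsilon$ and mollifying produces a family of smooth approximate subsolutions close to $v$; inserting them in the scheme and balancing the shaking error $O(\varepsilon)$ against a consistency error growing like a negative power of $\varepsilon$ yields $v-v_h\le Ch^{1/2}$. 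The opposite inequality must be run on $v_h$ itself, which is only Lipschitz, so one first mollifies the scheme solution and pays an extra power; this is the genuine obstacle, and it is this one-sided estimate bounding $v_h-v$ from the discrete side that degrades the exponent to $h^{1/4}$. The comparison principle stated above for the PDE, together with the discrete monotonicity guaranteed by the hypothesis on $h$ (nonnegative weights and $|c_a|h\le\frac12$), is what propagates both regularized inequalities to the claimed sup-norm bound.
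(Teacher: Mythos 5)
The paper does not actually prove this proposition from scratch: it imports all three assertions wholesale from Debrabant--Jakobsen \cite{Jakobsen} (existence and $C_1$ regularity from their Proposition~8.4, the rate $h^{1/4}$ from their Theorem~7.2, the uniform bound and Lipschitz estimate from their Corollary~8.3). Your proposal instead reconstructs the underlying arguments, and for the regularity half it does so correctly and with the right mechanism. In particular, your mean-square coupling of the two one-step positions --- same uniform index $i$, same sign $\epsilon$, so that the $\sqrt{h}$ fluctuation is mean zero, the cross term vanishes, and $\Esp{|X-Y|^2}\le |x-y|^2(1+Ch)$ --- is exactly the trick that defeats the fatal $(1+C\sqrt{h})^{T/h}$ compounding, and it is the same mechanism that powers the cited proofs. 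The $L^\infty$ bound by one-step Gronwall, the explicit-in-time well-posedness, and the time-H\"older-$\frac12$ estimate via the $O(Nh)$ mean-square displacement of the $N$-step walk (rather than telescoping one-step increments, which would only give $N\sqrt{h}$) are all sound in outline. So for \reff{LipschitVh} and the $C_1(Q)$ membership your route is a legitimate, self-contained alternative to the paper's citation, at the cost of leaving the multi-step dynamic-programming representation (needed for the time regularity, with the $c_a$ and $f_a$ terms along the path) at sketch level.

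The genuine gap is in the rate $|v-v_h|_0\le Ch^{1/4}$. First, a quantitative slip: shaking the coefficients of \reff{hjb} at scale $\varepsilon$ and mollifying does \emph{not} yield $v-v_h\le Ch^{1/2}$ on the easy side. The scheme's consistency error for a smooth function involves $\partial_t^2 u$ and spatial derivatives up to order four, hence is $O(h\,\varepsilon^{-3})$ for a parabolically mollified Lipschitz function; balancing $\varepsilon + h\varepsilon^{-3}$ gives $\varepsilon\sim h^{1/4}$ and already only $h^{1/4}$ on this side. Second, and more seriously, the hard direction (bounding $v_h-v$) is named but not executed. Carrying it out requires shaking the coefficients of the \emph{scheme}, proving that the shaken scheme solutions are uniformly Lipschitz in $x$ and H\"older-$\frac12$ in time (uniformly in the shaking parameter), establishing a continuous-dependence/comparison estimate for the discrete scheme itself, and then interchanging the roles of equation and scheme in Krylov's argument \cite{Krilov}; it is precisely the uniform regularity of $v_h$ that lets semi-Lagrangian schemes achieve the symmetric exponent $1/4$ on this side instead of a degraded one. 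None of this machinery is set up in your proposal, so as written the rate estimate is a roadmap to the proof in \cite{Jakobsen}, not a proof; the regularity assertions, by contrast, you have essentially proved.
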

\begin{proof}
The existence of a solution in $C_1(Q)$ is an application of the  proposition 8.4 in \cite{Jakobsen}. 
The error estimate corresponds to the theorem 7.2  in \cite{Jakobsen}.
The existence of a   Lipschitz constant uniform in  $x$  and  the uniform bound are given by the corollary  8.3 in the same article.
\end{proof}
It is assumed throughout this section for simplicity that  $N$ is fixed. 
For a function $v$ from  $\R^d$ to $\R$, we denote  the  set of all the values taken by $v$  on the grids  $(y_{\bar i, \tilde j})_{\bar i, \tilde j}$ by
$(v^{\bar i, \tilde j})_{\bar i, \tilde j}$.
We define the operator  $T_{\rho}$ ($\rho = (h,\Delta x)$) with values in $C(\R^d)$ such that   $T_{\rho}((v^{\bar i, \tilde j})_{\bar i, \tilde j})$ is an approximation of $v$ (not necessarily an interpolation). In the sequel we still note  $T_{\rho}$
the operator defined on the set of function $v$ from $\R^d$ to $\R$ by $T_{\rho} v :=  T_{\rho}((v^{\bar i, \tilde j})_{\bar i, \tilde j}).$
\\
We consider the HJB equation discretized at the grids points:
\begin{eqnarray}
v^{\bar i,\tilde j}_{\rho}(t+h) & =&  v^{\bar i,\tilde j}_{\rho}(t)   \nonumber \\
& & + \inf_{a \in \mathop{A}}  \left [  ( L_{a,h} T_{\rho} ( (v^{\bar k,\tilde l}_{\rho}(t))_{\bar k, \tilde l }))(t,y_{\bar i, \tilde j})  \right]  \label{hjbInterpGeneral} 
\end{eqnarray}
with a linear interpolation between  $0$ and  $h$ following \reff{hjbCamilli1}.\\
We denote  $\tilde v_{\rho}(t) = T_{\rho}(( v^{\bar i, \tilde j}_{\rho}(t))_{(\bar i, \tilde j)})$ the reconstructed solution in  $C(R^d)$.\\
We will now describe the approximation operator so that the scheme  converges to the viscosity solution.
We will extend the notion of weight developed by \cite{Jakobsen}.
\begin{assumption}
\label{HypoPoids}
Suppose that   $T_{\rho}$ is an operator from $C_0(Q)$ to  $C_0(Q)$, that there exists a function of $h$  $ \tilde K_h \xrightarrow{h \longrightarrow 0} 0$ such that 
for  $x \in M_{\bar i}$
\begin{eqnarray}
(T_{\rho} f)(x) & = & \sum_{\tilde j \in [0,N]^d } (w^h_{\bar i, \tilde j}(f))(x) f(y_{\bar i, \tilde j}) \\
0 \le (1 - \tilde K_h h) & \le &  \sum_{\tilde j \in [0,N]^d} (w^h_{\bar i, \tilde j}(f))(x)  \le 1+ \tilde K_h h 
\end{eqnarray} 
and that the functions  $w^h_{\bar i, \tilde j}(f)$ are positive weights functions depending on  $f$, $h$ and the support $M_{\bar i}$.
\end{assumption}
\begin{remark}
The previous operator is more general than the one defined by  \cite{Jakobsen}:
\begin{itemize}
\item A priori it depends on  $h$. It allows us to  accept some reconstructed solutions even if small oscillations are present. 
\item We don't impose  $w^h_{\bar i, \tilde j}(f)(y_{\bar k, \tilde l}) = \delta_{\bar i \bar k} \delta_{\tilde j,\tilde l}$ such that the approximation operator is not necessarily an interpolation operator.
\item The weight depends on the fonction $f$ which is used.
\end{itemize}
\end{remark}
The following theorem shows that any approximation operator satisfying the above assumptions converges to the viscosity solution.
\begin{theorem}
Suppose $T_{\rho}$ satisfies the assumptions  \ref{HypoPoids}.
We consider a sequence  $ \rho_p=(h_p,\Delta x_p) \longrightarrow (0,0)$  such that  $\frac{\Delta x_p}{h_p}  \longrightarrow  0$,  $h_p \le  1$.
Let's build a solution  $\tilde v_{\rho_p}$ of \reff{hjbInterpGeneral} for all  $p$,
 then $\tilde v_{\rho_p}$ converges to the viscosity solution of \reff{hjb}.
Moreover for $h_p$ small enough there exists $C$ independent on  $h_p$, $N$, $\Delta x_p$ such that 
\begin{eqnarray}
|\tilde v_{\rho} - v |_0 & \le &  C ( h_p^{\frac{1}{4}}+\frac{\Delta x_p}{h_p} + \tilde K_{h_p}) 
\end{eqnarray}
\label{TheoConv}
\end{theorem}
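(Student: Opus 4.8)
The plan is to compare the fully discrete solution $\tilde v_\rho$ with the time-discrete semi-Lagrangian solution $v_h$ of \reff{hjbCamilli}, for which Proposition \ref{converDisH} already supplies $|v-v_h|_0\le Ch^{\frac14}$ together with a bound and a Lipschitz constant for $v_h$ that are uniform in $h$. By the triangle inequality it then suffices to estimate $G_n:=\|\tilde v_\rho(t_n,\cdot)-v_h(t_n,\cdot)\|_\infty$ at the discrete times $t_n=nh$ (the intermediate values being fixed by the same linear-in-time interpolation \reff{hjbCamilli1}, which is routine to control by the endpoint errors), and to prove $\sup_n G_n\le C(\tfrac{\Delta x}{h}+\tilde K_h)$. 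Convergence to the viscosity solution is then immediate from $h_p\to 0$, $\tfrac{\Delta x_p}{h_p}\to 0$ and $\tilde K_{h_p}\to 0$.

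First I would rewrite both schemes in ``solution form.'' Setting $S_{a,h}(w)(t,x):=\sum_{i}\frac{1}{2q}\big(w(\phi^{+}_{a,h,i})+w(\phi^{-}_{a,h,i})\big)+hc_a w(x)+hf_a$, so that $L_{a,h}(w)=S_{a,h}(w)-w$, equation \reff{hjbCamilli} reads $v_h(t+h,\cdot)=\inf_a S_{a,h}(v_h(t))$, and the basic stability property is the near-contraction $\|S_{a,h}(w_1)-S_{a,h}(w_2)\|_\infty\le(1+h\hat K)\|w_1-w_2\|_\infty$, which holds because the weights $\frac{1}{2q}$ sum to one over the $2q$ shifted points and $h|c_a|\le h\hat K$. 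For \reff{hjbInterpGeneral} the same rewriting gives $v^{\bar i,\tilde j}_\rho(t+h)=\inf_a S_{a,h}(\tilde v_\rho(t))(y_{\bar i,\tilde j})+\big(v^{\bar i,\tilde j}_\rho(t)-\tilde v_\rho(t,y_{\bar i,\tilde j})\big)$, the last bracket being the defect produced by $T_\rho$ not being an interpolation. Applying $T_\rho$ to the updated grid values and subtracting $v_h(t+h,\cdot)$ produces a one-step inequality in which, besides the contraction factor $(1+Ch)$ and the normalization defect $C\tilde K_h h$ from Assumption \ref{HypoPoids}, there appears a consistency term controlled by $\|T_\rho w-w\|_\infty\le C\Delta x\,[w]_1+C|w|_0\tilde K_h h$ for Lipschitz $w$ (using that the grid points of a cell lie within $\Delta x$ of every point of that cell, cf. \reff{MaxDist}).

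The hard part will be that these consistency and defect terms are only of order $\Delta x+\tilde K_h h$ provided one knows \emph{a priori} that the numerical solution is uniformly Lipschitz: mere sup-norm closeness to $v_h$ is not enough, since $\tilde v_\rho$ could oscillate at the grid scale while staying $G_n$-close to $v_h$. I would therefore establish, by induction on $n\le N=T/h$, a uniform (in $\rho$) bound and Lipschitz estimate for $\tilde v_\rho(t_n,\cdot)$. The delicate point is the one already met in \cite{Jakobsen}: bounding $w(\phi^{+}_{a,h,i}(y))-w(\phi^{+}_{a,h,i}(y'))$ by the triangle inequality makes the Lipschitz constant grow by a factor $1+C\sqrt h$ per step (because of the $\sigma_a\sqrt{hq}$ displacement) and hence blow up over $T/h$ steps; one must instead average the $+$ and $-$ displacements and use $\frac12(|a+c|+|a-c|)\le\sqrt{|a|^2+|c|^2}$, which converts the $O(\sqrt h)$ contribution into an $O(h)$ one and restores a per-step factor $1+Ch$. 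Combined with the positivity and near-normalization of $T_\rho$ (so that the reconstruction neither amplifies the sup-norm by more than $1+\tilde K_h h$ nor increases the oscillation beyond an $O(\Delta x+\tilde K_h h)$ defect), this closes the induction and yields the required regularity; it is here that the CFL-type ratio $\tfrac{\Delta x}{h}$ will have to be tracked.

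With the uniform Lipschitz bound in hand the one-step inequality becomes $G_{n+1}\le(1+Ch)G_n+C(\Delta x+\tilde K_h h)$, and a discrete Gronwall argument over the $N=T/h$ steps gives $\sup_n G_n\le e^{CT}\big(G_0+C(\tfrac{\Delta x}{h}+\tilde K_h)\big)$. Since $G_0=\|T_\rho g-g\|_\infty\le C(\Delta x+\tilde K_h h)$ by the Lipschitz regularity of $g$, this is $\le C(\tfrac{\Delta x}{h}+\tilde K_h)$. Adding $|v-v_h|_0\le Ch^{\frac14}$ from Proposition \ref{converDisH} yields $|\tilde v_\rho-v|_0\le C\big(h^{\frac14}+\tfrac{\Delta x}{h}+\tilde K_h\big)$, which is the announced estimate and, along the sequence $\rho_p$, proves convergence of $\tilde v_{\rho_p}$ to the viscosity solution of \reff{hjb}.
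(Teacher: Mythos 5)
Your overall architecture --- comparing $\tilde v_{\rho}$ with the time-discrete solution $v_h$ of \reff{hjbCamilli}, deriving a one-step inequality of the form $e(t)\le(1+\hat C h)\,e(t-h)+C(\Delta x+\tilde K_h h)$, applying discrete Gronwall, treating $[0,h]$ via \reff{hjbCamilli1}, and concluding by the triangle inequality with Proposition \ref{converDisH} --- is exactly the paper's. But your middle paragraph inserts a step that the paper neither takes nor needs, and which moreover cannot be carried out from the stated hypotheses: the uniform-in-$\rho$ Lipschitz bound on $\tilde v_{\rho}(t_n,\cdot)$ proved by induction. The paper uses no regularity of the numerical solution whatsoever. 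In its one-step estimate, whenever two distinct spatial points must be compared --- the characteristic feet $\phi^{\pm}_{a,h,i}(t-h,y_{\bar i,\tilde k})$ and $\phi^{\pm}_{a,h,i}(t-h,x)$, which lie within $C\Delta x$ of each other by \reff{DiffPhi} since $x$ and $y_{\bar i,\tilde k}$ belong to the same cell $M_{\bar i}$ --- the increment is routed through $v_h$: one writes $|\tilde v_{\rho}(t-h,\phi^{+}(y_{\bar i,\tilde k}))-v_h(t-h,\phi^{+}(x))|\le \|\tilde v_{\rho}(t-h,\cdot)-v_h(t-h,\cdot)\|_\infty + C|v_h|_1\Delta x$, and $|v_h|_1$ is bounded uniformly in $h$ by Proposition \ref{converDisH}. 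The numerical solution is only ever evaluated at a single point, so the grid-scale oscillation you worry about never enters the recursion: sup-norm closeness to the Lipschitz anchor $v_h$ really is enough, contrary to your claim that it ``is not enough.''

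The detour is not merely superfluous; as stated it would stall. Assumption \ref{HypoPoids} constrains only the \emph{values} of the weights $w^h_{\bar i,\tilde j}(f)$ (positivity and near-normalization) and says nothing about their modulus of continuity in $x$; consequently a discrete Lipschitz bound on the grid values does not yield a Lipschitz bound on the reconstruction $T_{\rho}f$ uniform in $\rho$. Inside a cell, $T_{\rho}f$ is pinned only to the band between roughly $(1-\tilde K_h h)\min_{\tilde j}f(y_{\bar i,\tilde j})$ and $(1+\tilde K_h h)\max_{\tilde j}f(y_{\bar i,\tilde j})$ and may oscillate arbitrarily within it (the weights may even be discontinuous in $x$). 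The Debrabant--Jakobsen device $\frac12(|a+c|+|a-c|)\le\sqrt{|a|^2+|c|^2}$ tames the transport step, not the reconstruction step, so the per-step Lipschitz propagation cannot be closed at the level of generality of Theorem \ref{TheoConv} --- which is precisely why the paper anchors all spatial increments on $v_h$ instead. Note also that in the paper the ratio $\frac{\Delta x}{h}$ arises only at the Gronwall stage (a per-step defect $C\Delta x$ accumulated over $T/h$ steps), not through any regularity tracking. Once you delete the Lipschitz induction and replace it by the $v_h$-insertion above, the rest of your argument (the $(1+Ch)$ contraction from the sub-stochastic weights, the $\tilde K_h h|v_h|_0$ normalization defect, the initial-layer estimate, and the final triangle inequality) coincides with the paper's proof.
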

\begin{proof}
Choose $h \le   1$ and satisfying the hypothesis of proposition \reff{converDisH}.
We directly estimate  $\tilde v_{\rho} - v_h$ . Introduce
\begin{eqnarray}
e(t) & =& ||\tilde v_{\rho}(t,.) - v_h(t,.)||_{\infty} \nonumber
\end{eqnarray}
By definition of   $T_{\rho}$, for a given point  $x$ in $M_{\bar i}$
\begin{eqnarray}
|\tilde v_{\rho}(t,x) - v_h(t,x) |  & \le & |\sum_{\tilde j \in [0,N]^d } w^h_{\bar i, \tilde j}(\tilde v_{\rho})(x)  (v_{\rho}^{\bar i, \tilde j}(t) - v_h(t,x))|   \nonumber\\
& &  +  |\sum_{\tilde j \in [0,N]^d } w^h_{\bar i, \tilde j}(\tilde v_{\rho})(x)  -1| \quad  |v_h(t,x)| \nonumber \\
                             &  \le &  \sum_{ \tilde j \in [0,N]^d } w^h_{\bar i, \tilde j }(\tilde v_{\rho})(x)) |v_{\rho}^{\bar i, \tilde j}(t) - v_h(t,x)|  \nonumber \\ 
                             &  & + \tilde K_h h |v_h|_0 \nonumber \\
                            & \le & (1+ \tilde K_h h) |v_{\rho}^{\bar i, \tilde k} - v_h(t,x)|  \nonumber \\ 
                             &  & + \tilde K_h h |v_h|_0
\label{eqnJac}
\end{eqnarray}
with  $y_{\bar i, \tilde k}$ such that  $|v_{\rho}^{\bar i, \tilde k}(t) - v_h(t,x)|$ maximizes $|v_{\rho}^{\bar i, \tilde j}(t) - v_h(t,x)| $.\\
Moreover we denote 
\begin{eqnarray}
(\hat L_{a,h} v)(t,x)& =& \frac{1}{2q} \sum_{i=1}^q (v(t,\phi^{+}_{a,h,i}(t,x ))+  v(t,\phi^{-}_{a,h,i}(t,x)))   \nonumber \\
&  &  + h c_a(t,x) v(t,x) + h f_a(t,x)
\end{eqnarray}
such that $V :=  v_{\rho}^{\bar i, \tilde k}(t) - v_h(t,x) $ satisfies
\begin{eqnarray}
V  &  = &   \inf_a [ (\hat L_{a,h} \tilde v_{\rho})(t-h,y_{\bar i, \tilde k})  - \inf_a   (\hat L_{a,h} v_{h})(t-h,x) ] \nonumber 
\end{eqnarray}
So using $| inf . - inf . | \le sup | . - .|$ we get 
\begin{eqnarray}
|V|  & \le &     \frac{1}{2q}  \sum_{i=1}^q   \left [\sup_a | \tilde v_{\rho}(t-h,\phi^{+}_{a,h,i}(t-h,y_{\bar i, \tilde k}))  \right.  \nonumber \\
  & & - v_{h}(t-h,\phi^{+}_{a,h,i}(t-h,x))| \nonumber\\
  &     &  \left . + \sup_a | \tilde v_{\rho}(t-h, \phi^{-}_{a,h,i}(t-h,y_{\bar i, \tilde k}))-    v_{h}(t-h,\phi^{-}_{a,h,i}(t-h,x)) |  \right ]\nonumber \\
  &   & +   h \sup_a |c_a|_0 |v_{\rho}^{\bar i, \tilde k}(t-h) - v_h(t-h,y_{\bar i, \tilde k})| + h |v_h|_0 |c_a|_1 \Delta x |    \nonumber \\
  & &  +    h \sup_a| f_a|_1 \Delta x \nonumber 
\end{eqnarray}
Using the fact that the data in \reff{hjb} belong to  $C_1(Q)$, such that 
\begin{eqnarray}
\label{DiffPhi}
| \phi^{-}_{a,h,i}(t,y_{\bar i, \tilde k}) -\phi^{-}_{a,h,i}(t,x) | & \le & \Delta x(1 +  \sup_a |b_a|_1 h  + \sup_a |(\sigma_a)_i|_1 \sqrt{h q})  \nonumber\\
                                                        & \le & \Delta x (1+C (\sqrt{h q} + h))  \nonumber \\
                                                        & \le & C \Delta x 
\end{eqnarray}
and using the fact that $|v_h|_1$ is bounded independently on  $h$, one gets the estimate
\begin{eqnarray}
| \tilde v_{\rho}(t-h,\phi^{+}_{a,h,i}(t-h,y_{\bar i, \tilde k})) & -& v_{h}(t-h,\phi^{+}_{a,h,i}(t-h,x))| \le \nonumber \\
& &  || \tilde v_{\rho}(t-h,.) - v_h(t-h,.)||_\infty + \nonumber \\
& & C |v_h|_1 \Delta x \nonumber 
\end{eqnarray}
Using the fact that $|f_a|_1$, $|c_a|_1$  are bounded independently of  $a$:
\begin{eqnarray}
|V| & \le &  || \tilde v_{\rho}(t-h,.) - v_h(t-h,.)||_\infty +   \nonumber \\
              & & +  h \sup_a |c_a|_0 |v_{\rho}^{\bar i, \tilde k}(t-h) - v_h(t-h,y_{\bar i, \tilde k})|  \nonumber \\ 
         & &  +  C |v_h|_1 \Delta x  +  h |c_a|_1 |v_h|_0 \Delta x +   h \sup_a| f_a|_1 \Delta x \nonumber  \\
              & \le & || \tilde v_{\rho}(t-h,.) - v_h(t-h,.)||_\infty (1+ h \hat K ) + \Delta x C \nonumber
\end{eqnarray}
where the constant  $C$ depends on  $\tilde K$, $|v_h|_1$, $\hat K$.\\
So
\begin{eqnarray}
 |v_{\rho}^{\bar i, \tilde k}(t) - v_h(t,x)| & \le & e(t-h) (1+ h \hat K) + C \Delta x 
\end{eqnarray}
By combining the above equation with \reff{eqnJac}:
\begin{eqnarray}
e(t) & \le &  (1+ \tilde K_h h)(1+ h \hat K)   e(t-h) +  C (\Delta x + h \tilde K_h) \nonumber 
\end{eqnarray}
so there exists $\hat C$ such that
\begin{eqnarray}
e(t) & \le &  (1+ \hat C h)  e(t-h) +  C (\Delta x + h \tilde K_h) \nonumber 
\end{eqnarray}
Moreover applying the previous iteration at the first time step :
\begin{eqnarray}
e(h)  & \le &  (1+ \hat C  h) |g|_0 + C (\Delta x + h \tilde K_h) \nonumber 
\end{eqnarray}
and by using the definition of  $v_{\rho}$ on $[0,h]$ given by \reff{hjbCamilli1}
\begin{eqnarray}
e(t)& \le & \frac{t}{h} \left[(1+ \tilde K_h h) |g|_0 + C (\Delta x + h \tilde K_h) \right] + (1- \frac{t}{h}) |g|_0, \forall t \le h \nonumber 
\end{eqnarray}
Using the discrete Gronwall lemma 
\begin{eqnarray}
e(t) & \le &  C (\frac{\Delta x}{h} + \tilde K_h)  e^{ \hat C T}    \forall t \le T \nonumber 
\end{eqnarray}
Moreover by using  $|\tilde v_{\rho_p} - v|_0 \le | \tilde v_{\rho_p} - v_{h_p}|_0 + |v_{h_p} -v|_0$ and the proposition \reff{converDisH} we get the final result.
\end{proof}
\begin{remark}
The suppositions  on the  weight function assure that the scheme is a nearly monotone one. The approximation of the function leads to a global scheme
which is the perturbation of a  monotone one and it is not surprising that ist is converging towards the viscosity solution (see remark 2.1 in \cite{barlessouganidis})
\end{remark}
\begin{remark}
 Under assumption \ref{HypoPoids} with $\tilde K_h=0$ we just impose that on each cell $M_{\hat i}$, given the values $(f(y_{\hat i, \tilde j})_{\tilde j})$,  all reconstructed values $(T_{\rho} f)(x)$ 
 are in  between the $\min_{\tilde j}(f(y_{\hat i, \tilde j}))$ and $\max_{\tilde j}(f(y_{\hat i, \tilde j}))$.
Taking  $\tilde K_h$ not null and decreasing to $0$ permits to release the previous condition and to get reconstructed values slightly below  $\min_{\tilde j}(f(y_{\hat i, \tilde j}))$ 
or above $\max_{\tilde j}(f(y_{\hat i, \tilde j}))$. An application of the result above  permits to get convergence results for some new schemes in the section below and for some schemes in the litterature where authors
could not get any convergence results.
\end{remark}
\begin{remark}
Using a high order scheme  on each mesh $M_{\tilde i}$ won't improve the theoretical rate of convergence, but improving the consistency at least locally we hope that the observe rate of convergence will be higher.
\end{remark}

\section{Some approximating operators}
\label{approx}
In this section we first develop some methods based on Lagrange  interpolators  and splines.
We examine in particular the Lagrange interpolators using the Gauss Lobatto Legendre and Gauss Lobatto  Chebyshev interpolators associated to some truncation. 
We will also consider the case of cubic splines and monotone cubic splines used by \cite{Jakobsen} which have the characteristic of not requiring truncation.
In the last section, we will detail some  polynomial approximation of Bernstein type that also do not require truncation and provide a  monotone scheme.

\subsection{Truncated Lagrange interpolators}
For more information on the Lagrange interpolators and their properties, one can refer to Appendix \reff{Annexe1}.
In this section, we suppose that a Lagrange interpolator with grid points  $X =(\xi_i)_{i=0,N} \in [-1,1]^{N+1}$ is given.
The space is discretized with meshes  $M_{\bar i} =  \displaystyle \prod_k [x_{i_k}, x_{i_k}+ \Delta x ]$  with $\bar i = (i_1,..,i_d)$ and on each mesh a Lagrange interpolator
 $I^{X}_{\Delta x, N }$ is defined by tensorization giving a multidimensional interpolator with   $(N+1)^d$ points.
We are particularly interested in Gauss Lobatto Chebyshev and  Gauss Lobatto Legendre interpolators that have a low Lebesgue constant  and thus avoid oscillations.
On a mesh $M_{\tilde i}$ and for a point  $x$ in this mesh, we note  $\underline v_{\bar i} = \displaystyle  \min_{\tilde j} v(y_{\bar i, \tilde j})$, $\bar v_{\bar i} = \displaystyle  \max_{\tilde j} v(y_{\bar i, \tilde j})$.
We introduce the following truncated operator:
\begin{eqnarray}
\hat I^{X}_{h, \tilde K_h, \Delta x, N }(v) & =&  (\underline v_{\bar i} -\tilde K_h h |\underline v_{\bar i}|)  \vee I^{X}_{\Delta x, N }(v) \wedge  (\bar v_{\bar i} + \tilde K_h  h |\bar v_{\bar i}|)  \nonumber 
\end{eqnarray}
where $\tilde K_h h < 1$ and $\tilde K_h \xrightarrow{h \longrightarrow 0} 0$.
\begin{proposition}
\label{InterpTruncProp}
The interpolator $\hat I^X_{h, \tilde K_h, \Delta x, N}$  has the following properties:
\begin{eqnarray}
|| \hat I^{X}_{h, \tilde K_h, \Delta x, N}(f)(x) ||_\infty & \le & (1 + \tilde K_h h) ||f||_\infty  \nonumber
\end{eqnarray}
There exists  $C_{N,d}$ such that for each Lipschitz bounded function $f$:
\begin{eqnarray}
|| \hat I^{X}_{h, \tilde K_h, \Delta x, N}(f)(x) -f(x) ||_\infty & \le &  (C_{N,d} \Delta x  K + \tilde K_h h )   |f|_1  
\label{InterpolTronc}
\end{eqnarray}
\end{proposition}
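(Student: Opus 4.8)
The plan is to prove the two inequalities separately, reducing the second (the consistency estimate) to the classical untruncated Lagrange interpolation error bound recalled in Appendix~\reff{Annexe1}, combined with the elementary observation that the clipping used in the definition of $\hat I^{X}_{h,\tilde K_h,\Delta x,N}$ is a non-expansive operation.

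For the uniform bound, I would fix $x$ in a mesh $M_{\bar i}$ and read off the definition directly: since $\hat I^{X}_{h,\tilde K_h,\Delta x,N}(f)(x)$ is obtained from $I^{X}_{\Delta x,N}(f)(x)$ by clipping it into the interval $[\underline v_{\bar i}-\tilde K_h h|\underline v_{\bar i}|,\ \bar v_{\bar i}+\tilde K_h h|\bar v_{\bar i}|]$, its value lies in that interval. As $\underline v_{\bar i}$ and $\bar v_{\bar i}$ are values of $f$ at grid points of $M_{\bar i}$, one has $|\underline v_{\bar i}|\vee|\bar v_{\bar i}|\le ||f||_\infty$, so both endpoints are bounded in absolute value by $(1+\tilde K_h h)||f||_\infty$, which gives the first inequality after taking the supremum over $x$.

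For the consistency estimate, write $p:=I^{X}_{\Delta x,N}(f)$, $A:=\underline v_{\bar i}-\tilde K_h h|\underline v_{\bar i}|$ and $B:=\bar v_{\bar i}+\tilde K_h h|\bar v_{\bar i}|$, noting $A\le\underline v_{\bar i}\le\bar v_{\bar i}\le B$, so that $\hat I^{X}_{h,\tilde K_h,\Delta x,N}(f)(x)=A\vee p(x)\wedge B$ is the projection $\pi$ of $p(x)$ onto $[A,B]$. Two ingredients are needed: (i) the classical interpolation bound $||I^{X}_{\Delta x,N}(f)-f||_\infty\le C_{N,d}\,\Delta x\,K\,|f|_1$ (with $K$ the Lebesgue constant of the grid $X$), taken from the appendix; and (ii) the fact that $f(x)$ is never far from $[A,B]$. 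For (ii) I would use that every grid point $y_{\bar i,\tilde j}$ lies in the hypercube $M_{\bar i}$, whose diameter is $\sqrt{d}\,\Delta x$, so that by Lipschitz continuity $\underline v_{\bar i}-\sqrt{d}\,\Delta x\,[f]_1\le f(x)\le \bar v_{\bar i}+\sqrt{d}\,\Delta x\,[f]_1$; since $A\le\underline v_{\bar i}$ and $B\ge\bar v_{\bar i}$, the distance from $f(x)$ to $[A,B]$ is at most $\sqrt{d}\,\Delta x\,[f]_1$. Using that $\pi$ is $1$-Lipschitz, I would then estimate
\begin{eqnarray}
|\hat I^{X}_{h,\tilde K_h,\Delta x,N}(f)(x)-f(x)| & \le & |\pi(p(x))-\pi(f(x))|+|\pi(f(x))-f(x)| \nonumber \\
& \le & |p(x)-f(x)|+\sqrt{d}\,\Delta x\,[f]_1 \nonumber
\end{eqnarray}
and conclude with (i), absorbing the term $\sqrt{d}\,\Delta x\,[f]_1$ into $C_{N,d}\,\Delta x\,K\,|f|_1$ (the Lebesgue constant satisfying $K\ge 1$) and bounding $[f]_1\le|f|_1$. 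Taking the supremum over $x$ yields \reff{InterpolTronc}, the extra $\tilde K_h h$ in the stated bound being only additional slack.

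The only genuinely delicate point is step (ii): when $f(x)$ itself falls outside the data range $[\underline v_{\bar i},\bar v_{\bar i}]$ the naive identity $\hat I^{X}_{h,\tilde K_h,\Delta x,N}(f)(x)=p(x)$ fails, and one really must control how far $f(x)$ can sit outside the truncation window. This is exactly where the Lipschitz hypothesis on $f$ and the mesh diameter $\sqrt{d}\,\Delta x$ enter, and it is the reason the estimate is stated for Lipschitz bounded functions only. Everything else, in particular the dependence on $N$, $d$ and on the Lebesgue-constant factor $K$, is inherited verbatim from the untruncated interpolation estimate of the appendix, so the constant $C_{N,d}$ carries over unchanged up to the harmless enlargement above.
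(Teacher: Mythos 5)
Your proof is correct, and it takes a mildly but genuinely different route from the paper's for the second estimate (the first inequality is proved identically, by reading off the truncation thresholds). The paper argues by case analysis: at points where no truncation occurs, $\hat I^{X}_{h,\tilde K_h,\Delta x,N}(f)(x)=I^{X}_{\Delta x,N}(f)(x)$ and the Jackson-type bound \reff{interpolX} applies directly; where truncation occurs, say at the upper threshold, the value equals $(1+\tilde K_h h)$ times the extremal grid value, and the error is bounded by $\tilde K_h h\,|f|_0+|f(y_{\bar i,\tilde j})-f(x)|\le(\tilde K_h h+K\Delta x)\,|f|_1$, using Lipschitz continuity between $x$ and the extremal grid point of the same mesh, together with a sign discussion (``suppose for instance $f(y_{\bar i,\tilde j})\ge 0$''). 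You replace this case split by the single observation that clipping is the non-expansive projection $\pi$ onto $[A,B]\supseteq[\underline v_{\bar i},\bar v_{\bar i}]$, writing $|\pi(p(x))-f(x)|\le|p(x)-f(x)|+\mathrm{dist}(f(x),[A,B])$ and bounding the distance by $\sqrt{d}\,\Delta x\,[f]_1$ via the mesh diameter. The ingredients are the same (the appendix interpolation bound for Lipschitz functions plus in-mesh Lipschitz control of $f$ relative to the extremal grid values), but your decomposition buys three things: it handles both truncation directions at once, it avoids the sign discussion entirely, and it actually yields the slightly stronger estimate with no $\tilde K_h h$ term at all --- you correctly flag that term as slack, whereas in the paper it arises from crudely bounding $(1+\tilde K_h h)f(y_{\bar i,\tilde j})$. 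One small notational slip: in Appendix \reff{Annexe1}, $K$ denotes the Lipschitz constant of $f$ and the Lebesgue constant is $\lambda_N(X)$ (absorbed into $C_{N,d}$), not the reverse as your parenthetical claims; given the statement's redundant factor $K\cdot|f|_1$, this misreading is harmless, and your absorption of $\sqrt{d}\,\Delta x\,[f]_1$ into the first term goes through regardless since $[f]_1\le|f|_1$.
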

\begin{proof}
The first assertion is obtained by definition of the truncation.
The second assertion can be deduced from \reff{interpolX}:
if there is no truncation  in $x$
\begin{eqnarray}
| \hat I^{X}_{h, \tilde K_h, \Delta x, N}(f)(x) - f(x) | & \le &  C K \Delta x \frac{(1+\lambda_N(X))^d}{N+2}\nonumber 
\end{eqnarray}
where $\lambda_N$ is the Lebesgue constant assiocated to  interpolator.
If there is truncation in $x$, for example a truncation to the maximum value, we  note $y_{\bar k, \tilde l }$ the point where  $f(y_{\bar i, \tilde j })$ is maximum and we suppose for instance that $f(y_{\bar i, \tilde j }) \ge 0$.
We have the relation:
\begin{eqnarray}
| \hat I^{X}_{h, \tilde K_h, \Delta x, N}(f) (x ) -f(x)| & = & | (1+\tilde K_h h) f(y_{\bar i, \tilde j }) -f(x)| \nonumber \\
                        & \le & \tilde K_h h |f|_0 + |f(y_{\bar i, \tilde j }) -f(x)| \nonumber \\
                        & \le & (\tilde K_h h  + K \Delta x) |f|_1 \nonumber
\end{eqnarray}
Of course the same result can be obtained with a minimum truncation.
\end{proof}
\begin{proposition}
The interpolator  $\hat  I^{X}_{h, \tilde K_h, \Delta x, N}$  satisfies the assumptions  \reff{HypoPoids} so that $ \tilde v_{h, \Delta x, N}$ converges to the viscosity solution.
Moreover 
\begin{eqnarray}
|| v - \tilde v_{\rho} ||_\infty \le    O( h^{\frac{1}{4}}) + O( \frac{ \Delta x}{ h}) + O(\tilde K_h) \nonumber
\end{eqnarray} 
\end{proposition}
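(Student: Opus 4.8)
The plan is to verify that $\hat I^{X}_{h,\tilde K_h,\Delta x,N}$ satisfies Assumption \ref{HypoPoids} and then to quote Theorem \ref{TheoConv} directly; the error bound is nothing but the conclusion of that theorem once the assumption is checked, because $\tilde K_{h}\to 0$ as $h\to 0$ is built into the definition of the truncated operator. The norm bound $\|\hat I^{X}_{h,\tilde K_h,\Delta x,N}(f)\|_\infty\le(1+\tilde K_h h)\|f\|_\infty$ of Proposition \ref{InterpTruncProp} already shows boundedness, and since the operator is a clamp (two nested $\min/\max$) of a polynomial interpolant it maps continuous functions to continuous functions, so the property $C_0(Q)\to C_0(Q)$ is immediate. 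The whole difficulty is concentrated in producing, for each fixed $x\in M_{\bar i}$, a family of \emph{positive} weights $(w^h_{\bar i,\tilde j}(f)(x))_{\tilde j}$ with $(\hat I f)(x)=\sum_{\tilde j}w^h_{\bar i,\tilde j}(f)(x)\,f(y_{\bar i,\tilde j})$ and $\sum_{\tilde j}w^h_{\bar i,\tilde j}(f)(x)\in[1-\tilde K_h h,\,1+\tilde K_h h]$. The naive candidate, the Lagrange basis functions, fails precisely because those weights are not positive, which is exactly why the truncation was introduced.

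The key observation I would exploit is that only the \emph{value} returned by the operator, not its analytic form, has to be represented. Fixing $x\in M_{\bar i}$ and setting $V:=(\hat I f)(x)$, the very definition of the clamp gives
\[
\underline v_{\bar i}-\tilde K_h h\,|\underline v_{\bar i}|\ \le\ V\ \le\ \bar v_{\bar i}+\tilde K_h h\,|\bar v_{\bar i}|,
\]
an interval that is non-degenerate since its endpoints straddle $[\underline v_{\bar i},\bar v_{\bar i}]$. Let $y_{\bar i,\tilde j_-}$ and $y_{\bar i,\tilde j_+}$ be nodes of $M_{\bar i}$ realising $\underline v_{\bar i}=f(y_{\bar i,\tilde j_-})$ and $\bar v_{\bar i}=f(y_{\bar i,\tilde j_+})$. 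I would place all the mass on these two nodes and set every other weight to $0$, so that the representation holds trivially and only the sign of the weights and the value of their sum remain to be controlled.

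I would then split according to the location of $V$. If $\underline v_{\bar i}\le V\le\bar v_{\bar i}$, write $V=\alpha\,\underline v_{\bar i}+(1-\alpha)\bar v_{\bar i}$ with $\alpha\in[0,1]$: the weights are non-negative and sum to exactly $1\in[1-\tilde K_h h,1+\tilde K_h h]$. If $V<\underline v_{\bar i}$ I would use the single weight $w=V/\underline v_{\bar i}$ on $y_{\bar i,\tilde j_-}$; the bound on $V$ forces $w\in[1-\tilde K_h h,1)$ when $\underline v_{\bar i}>0$ and $w\in(1,1+\tilde K_h h]$ when $\underline v_{\bar i}<0$, the value $\underline v_{\bar i}=0$ being excluded because the lower endpoint then equals $\underline v_{\bar i}$. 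In each subcase $w>0$ and $w\in[1-\tilde K_h h,1+\tilde K_h h]$, positivity of $1-\tilde K_h h$ resting on the standing hypothesis $\tilde K_h h<1$. The case $V>\bar v_{\bar i}$ is handled symmetrically with $w=V/\bar v_{\bar i}$ on $y_{\bar i,\tilde j_+}$.

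This verifies Assumption \ref{HypoPoids}, whence Theorem \ref{TheoConv} applies along any sequence with $\Delta x_p/h_p\to0$ and delivers $\|v-\tilde v_\rho\|_\infty\le C(h^{1/4}+\Delta x/h+\tilde K_h)$, i.e.\ the stated estimate. The step I expect to be the main obstacle is the sign bookkeeping in the truncation regime: because the admissible band widens by $\tilde K_h h\,|\underline v_{\bar i}|$ rather than by a fixed amount, the single-node weight can fall on either side of $1$ according to the sign of the extremal nodal value, and both directions must be checked to remain inside $[1-\tilde K_h h,1+\tilde K_h h]$. A secondary point worth a remark is that the clamp may introduce jumps across cell interfaces; since the proof of Theorem \ref{TheoConv} uses only the pointwise weight bound and never the continuity of the weights in $x$, this does not disturb the argument.
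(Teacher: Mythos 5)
Your construction is exactly the paper's: concentrate the weights on the two nodes realising $\underline v_{\bar i}$ and $\bar v_{\bar i}$, use the convex combination $V=\alpha\,\underline v_{\bar i}+(1-\alpha)\,\bar v_{\bar i}$ when no truncation occurs and the single ratio weight $V/\underline v_{\bar i}$ (resp.\ $V/\bar v_{\bar i}$) when it does, then invoke Theorem \ref{TheoConv} together with Proposition \ref{converDisH} for the rate. Your sign bookkeeping in the truncated case (distinguishing $\underline v_{\bar i}>0$, $<0$, and noting $\underline v_{\bar i}=0$ cannot occur there) is in fact spelled out more carefully than in the paper, which simply asserts the ratio lies in $[1-\tilde K_h h,\,1+\tilde K_h h]$.
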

\begin{proof}
Because of the truncation for each point  $x$ of a mesh $M_{\bar i}$, we have 
\begin{eqnarray}
(\hat  I^{X}_{h, \tilde K_h, \Delta x, N} (f)(x)  =  \underline w^h_{\bar i}(f)(x) \underline v_{\bar i} + \bar w^h_{\bar i}(f)(x) \bar  v_{\bar i} \nonumber
\end{eqnarray}
If $ \underline v_{\bar i}  \le  \hat  I^{X}_{h, \tilde K_h, \Delta x, N} (f)(x) \le  \bar  v_{\bar i}$ then
\begin{eqnarray}
 \underline w^h_{\bar i}(f)(x) & = & \frac{\hat  I^{X}_{h, \tilde K_h, \Delta x, N}(f)(x) - \bar  v_{\bar i}}{ \underline v_{\bar i} - \bar  v_{\bar i}}  \nonumber \\
\bar w^h_{\bar i}(f)(x) & = & 1 -\underline w^h_{\bar i}(f)(x)  \nonumber\\
0 & \le & \underline w^h_{\bar i}(f)(x)   \le  1  \nonumber
\end{eqnarray}
If $ \underline v_{\bar i} > \hat  I^{X}_{h, \tilde K_h, \Delta x, N} (f)(x)$
\begin{eqnarray}
 \bar w^h_{\bar i}(f)(x) &=& 0  \nonumber \\
\underline w^h_{\bar i}(f)(x)& =&  \frac{ \hat  I^{X}_{h, \tilde K_h, \Delta x, N} (f)(x)}{\underline v_{\bar i}}  \in [1-\tilde K_h h, 1+ \tilde K_h h]  \nonumber 
\end{eqnarray}
Otherwise
\begin{eqnarray}
 \underline w^h_{\bar i}(f)(x) &=& 0 \nonumber  \\
\bar w^h_{\bar i}(f)(x)& =& \frac{   \hat  I^{X}_{h, \tilde K_h, \Delta x, N} (f)(x)}{\bar v_{\bar i}}  \in [1-\tilde K_h h, 1+ \tilde K_h h]  \nonumber
\end{eqnarray}
Then choose the weight functions above associated to the points with values associated to the extremal points $\underline v_{\bar i}$, $\bar v_{\bar i}$
and take a weight equal to $0$ for other points.
The final estimation is obtained by proposition  \reff{converDisH} and theorem \reff{TheoConv}.
\end{proof}
\begin{remark}
Due to this estimation the truncation should be such that  $\tilde K_h = h^{\frac{1}{4}}$.
\end{remark}
We also give the consistency error:
\begin{proposition}
The consistency error is  in  $O( h  + \frac{\Delta x^2}{h})$ in areas where the truncation is achieved and  in $O( h  + \frac{\Delta x^{N+1}}{h})$ otherwise.
\end{proposition}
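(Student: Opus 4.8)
The plan is to decompose the consistency error into two contributions that can be estimated independently: the intrinsic error of the time plus semi-Lagrangian discretization \reff{hjbCamilli} applied to a smooth function, and the error introduced by replacing the exact values at the feet of the characteristics by the reconstructed values $T_{\rho}\varphi = \hat I^{X}_{h,\tilde K_h,\Delta x,N}\varphi$. Fixing a smooth test function $\varphi$ and a grid point $y_{\bar i,\tilde j}$, I would write the local truncation error as
$$\mathcal{E}_{\rho}(\varphi) = \frac1h\Big(\varphi(t+h,y_{\bar i,\tilde j}) - \varphi(t,y_{\bar i,\tilde j}) - \inf_a L_{a,h}(T_{\rho}\varphi)(t,y_{\bar i,\tilde j})\Big),$$
compare it with the continuous operator $\partial_t\varphi - \inf_a(\frac{1}{2} tr(\sigma_{a}\sigma_{a}^T D^2\varphi)+b_a D\varphi+c_a\varphi+f_a)$, and insert and subtract $L_{a,h}(\varphi)$ so that the interpolation effects are isolated in $L_{a,h}(T_{\rho}\varphi)-L_{a,h}(\varphi)$.

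For the intrinsic part I would Taylor-expand $\varphi(t,\phi^{\pm}_{a,h,i})$ around $(t,y_{\bar i,\tilde j})$, using that $\phi^{\pm}_{a,h,i}$ differs from the base point by $b_a h \pm (\sigma_a)_i\sqrt{hq}$. The symmetric $\pm$ design makes every odd power of the increment $(\sigma_a)_i\sqrt{hq}$ cancel in the half-sum $\frac{1}{2q}\sum_i(\varphi(\phi^{+}_{a,h,i})+\varphi(\phi^{-}_{a,h,i}))$. The surviving second-order term reproduces $\frac{h}{2}\sum_i (\sigma_a)_i^T D^2\varphi\,(\sigma_a)_i = \frac{h}{2}tr(\sigma_a\sigma_a^T D^2\varphi)$, the drift produces $h\, b_a D\varphi$, and the lowest-order terms that survive the symmetrisation beyond second order are of size $O(h^2)$ (the odd increments being cancelled). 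Hence $\frac1h L_{a,h}(\varphi)$ equals the spatial part of the operator up to $O(h)$. Combining this with the second-order Taylor expansion in time $\frac{\varphi(t+h,\cdot)-\varphi(t,\cdot)}{h} = \partial_t\varphi + O(h)$, and handling the infimum over $a$ through $|\inf . - \inf .|\le\sup|.-.|$, the intrinsic contribution to $\mathcal{E}_{\rho}(\varphi)$ is $O(h)$.

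It remains to estimate $\frac1h|L_{a,h}(T_{\rho}\varphi)-L_{a,h}(\varphi)|$. Since the coefficients of $L_{a,h}$ (the $\frac{1}{2q}$ weights, the centre term $-2$ and the $h c_a$ term) sum to a quantity bounded independently of $h$, this difference is controlled by $\|T_{\rho}\varphi-\varphi\|_\infty$, so the interpolation part of the consistency error is $O(\frac1h\|T_{\rho}\varphi-\varphi\|_\infty)$. I would then invoke \reff{InterpolTronc} together with the interpolation estimate \reff{interpolX}. Away from truncation, the reconstruction is the genuine Lagrange interpolant, whose error on a $C^{N+1}$ function is $O(\Delta x^{N+1})$, which gives the announced $O(h+\frac{\Delta x^{N+1}}{h})$. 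Where the truncation is active, the reconstruction is clipped to the discrete extremum $\bar v_{\bar i}$ or $\underline v_{\bar i}$, up to the factor $1\pm\tilde K_h h$; this is the delicate step and the main obstacle. The clipping can switch on only where the Lagrange interpolant overshoots the discrete maximum or minimum, which for a smooth $\varphi$ happens only in a neighbourhood of a critical point $D\varphi=0$; there the first-order variation of $\varphi$ over a cell vanishes, so $|\bar v_{\bar i}-\varphi(x)|$ and $|\underline v_{\bar i}-\varphi(x)|$ are $O(\Delta x^2)$ rather than $O(\Delta x)$, yielding $O(h+\frac{\Delta x^2}{h})$. The essential work is therefore to localise where truncation can occur and to show that the discrete extremum approximates $\varphi$ to second order there; everything else reduces to routine Taylor expansion and the already-established bound \reff{InterpolTronc}.
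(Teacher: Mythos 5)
Your overall decomposition coincides with the paper's: you split the consistency error into the pure time-discretization part $\frac1h|u(t,x)-(L_{a,h}u)(t-h,x)|$, bounded by $O(h)$ through the symmetric Taylor expansion (the odd powers of $(\sigma_a)_i\sqrt{hq}$ cancel, leaving second time derivatives and up to fourth space derivatives), plus a term $\frac{C}{h}\,|\hat I^{X}_{h,\tilde K_h,\Delta x,N}u-u|_0$ coming from the stability of $L_{a,h}$, with the untruncated zones handled by \reff{InterpRegulier}; this is exactly the paper's argument. The genuine problem is your treatment of the truncated zones. The claim that clipping ``happens only in a neighbourhood of a critical point $D\varphi=0$'' is false: a degree-$N$ Lagrange interpolant of a strictly monotone smooth function can still overshoot the discrete extremum, in a thin layer near a cell face where the $O(\Delta x^{N+1})$ interpolation oscillation exceeds the remaining variation $\varphi(\mbox{face})-\varphi(x)$. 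At such clipped points the critical-point mechanism gives nothing, so the inference ``clipping $\Rightarrow$ the cell is near a critical point $\Rightarrow$ second-order closeness of the discrete extremum'' has a hole, and you explicitly leave this step as ``essential work''; the proposal is therefore incomplete at the one point where the claimed $O(\Delta x^2)$ actually needs justification.

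The step closes without any localisation, and this is what the paper does in one line. If the truncation at the maximum is active at $x\in M_{\bar i}$ (take $\tilde K_h=0$), then by definition $\hat I^{X}_{h,\tilde K_h,\Delta x,N}u(x)=\bar u_{\bar i}$ and $I^{X}_{\Delta x,N}u(x)>\bar u_{\bar i}$, while the piecewise (multi)linear interpolant on the same grid, being a convex combination of the grid values, lies below $\bar u_{\bar i}$; hence the sandwich $I^{lin}u(x)\le \hat I^{X}_{h,\tilde K_h,\Delta x,N}u(x)\le I^{X}_{\Delta x,N}u(x)$, and subtracting $u(x)$ traps the error between $-O(\Delta x^{2})$ and $O(\Delta x^{N+1})$. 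Equivalently, in your language: the upward error $\bar u_{\bar i}-u(x)$ is controlled by the interpolant's own overshoot, hence $O(\Delta x^{N+1})$, while the downward error $u(x)-\bar u_{\bar i}\le \max_{M_{\bar i}}u-\bar u_{\bar i}$ is $O(\Delta x^{2})$ for \emph{any} $C^2$ function, by a Taylor expansion at the cell maximiser (whose gradient, or tangential gradient when the maximiser sits on a face carrying a sub-grid, vanishes) --- no information about \emph{where} clipping occurs is required. Replacing your localisation step by either of these one-sided bounds makes the proof correct, and the minimum truncation is symmetric.
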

\begin{proof}
Let $u$ be the solution of  \reff{hjb} that we suppose regular.
Defining
\begin{eqnarray}
E(u)& = & \frac{1}{h}  |\sup_a [ u(t,x) -  (L_{a,h} (\hat I^X_{h,\tilde K_h, \Delta x, N} u))(t-h,x)] | 
\label{consistency}
\end{eqnarray}
we get 
\begin{eqnarray}
E(u) & \le &  \frac{1}{h}  \sup_a |u(t,x) -   (L_{a,h} u)(t-h,x)| \nonumber \\
& &   +    \sup_a | (L_{a,h} u)(t-h,x)-(L_{a,h}\hat I^X_{h,\tilde K_h, \Delta x, N} u)(t-h,x)|  \nonumber \\
& \le & \frac{1}{h} \sup_a |u(t,x) -   ( L_{a,h} u)(t-h,x)| \nonumber \\
&  & +  
 \frac{C}{h}|\hat I^{X}_{h,\tilde K_h, \Delta x, N} u -u|_0 \nonumber  
\end{eqnarray}
using the assumption \ref{coeff}.
Besides using consistency of the scheme with equation \reff{hjb} and assumption \ref{coeff}:
\begin{eqnarray*}
\frac{1}{h} |u(t,x) -   ( L_{a,h} u)(t-h,x)| & \le & C h (| \frac{\partial^2 u}{\partial t^2}|_0 + | \frac{\partial^2 u}{\partial x^2}|_0  + | \frac{\partial^3 u}{\partial x^3}|_0 +
| \frac{\partial^4 u}{\partial x^4}|_0)
\end{eqnarray*}
The interpolation error is given by \reff{InterpRegulier} when no truncation is achieved.
When the truncation is effective for a point $x \in M_{\bar i}$, it means for example that the non truncated interpolator  gives a value which is above all the values at the grid point of the mesh. 
So $I^{X}_{h,\tilde K_h,\Delta x,2} u(x) \le  \bar u_{\bar i}  = \hat I^{X}_{h,\tilde K_h, \Delta x, N} u(x) \le   I^{X}_{h,\tilde K_h, \Delta x, N} u(x)$
and  the approximation has an error in between $O(\Delta x ^2)$ and $O(\Delta x^{N+1})$. Then the consistency error with this term is at least the one obtained by the
linear interpolator.
\end{proof}
\subsection{Cubic spline interpolators}
A cubic spline interpolation is used to interpolate a one-dimensional function. No discretization point inside the mesh is given.
With $d=1$, keeping the same notations  as before, the grid points are $y_{\bar i, \tilde j}$ with  $\bar i$ the mesh number and $\tilde j =0$ or $1$ corresponding to the left or right part of the mesh. In particular $y_{\bar i, 1} = y_{\bar i+1, 0}$.
\subsection{Truncated cubic spline}
Let $I^{X_c}_{\Delta x, 1}$ be the cubic spline interpolator.
We use the truncated interpolator:
\begin{eqnarray}
\hat I^{X_c}_{h, \tilde k_h, \Delta x }(v) & =&  (\underline v_{\bar i} -\tilde K_h h |\underline v_{\bar i}|)  \vee I^{X_c}_{\Delta x, 1 }(v) \wedge  (\bar v_{\bar i} + \tilde K_h  h |\bar v_{\bar i}|) \nonumber 
\end{eqnarray}
It is clear that the interpolator satisfies the assumptions \reff{HypoPoids} and that we satisfy the assumptions of theorem  \reff{TheoConv}.\\
As before, the consistency order depends on the fact the truncation has been performed or not.
\begin{proposition}
The consistency error with the interpolator    $\hat I^{X_c}_{h, \tilde k_h, \Delta x }$ is  in  $O( h  + \frac{\Delta x^2}{h})$  in areas where the truncation is achieved and  in $O( h  + \frac{\Delta x^{4}}{h})$ otherwise.
\end{proposition}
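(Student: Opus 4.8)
The plan is to reproduce, for the cubic spline operator $\hat I^{X_c}_{h, \tilde k_h, \Delta x}$, the decomposition already used for the truncated Lagrange interpolator. Starting from the consistency functional \reff{consistency} with $\hat I^{X_c}_{h, \tilde k_h, \Delta x}$ in place of $\hat I^X_{h,\tilde K_h,\Delta x,N}$, I would split it as $E(u) \le \frac{1}{h}\sup_a |u(t,x) - (L_{a,h} u)(t-h,x)| + \frac{C}{h}\,|\hat I^{X_c}_{h, \tilde k_h, \Delta x} u - u|_0$, the constant $C$ absorbing the $\frac{1}{2q}$ weights and the bound on $c_a$ supplied by \reff{coeff}. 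This reduces the estimate to a purely temporal term and a purely spatial (interpolation) term that can be handled independently.

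The temporal term is identical to the one treated in the Lagrange case: since $u$ is assumed regular, a Taylor expansion of the Camilli--Falcone increments $\phi^{\pm}_{a,h,i}$ around $x$, combined with the consistency of \reff{hjbCamilli} with \reff{hjb} and the uniform bounds \reff{coeff}, gives $\frac{1}{h}|u(t,x) - (L_{a,h} u)(t-h,x)| \le C h$ uniformly in $a$. This furnishes the $O(h)$ contribution present in both regimes, and I would simply refer to the earlier computation rather than redo it.

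The interpolation term is where the two regimes separate. Where no truncation occurs at $x$, $\hat I^{X_c}_{h, \tilde k_h, \Delta x}$ coincides with the plain cubic spline, so I would invoke the classical fourth-order estimate $|I^{X_c}_{\Delta x,1} u - u|_0 \le C\Delta x^4$ for smooth $u$; dividing by $h$ yields the announced $O(\frac{\Delta x^4}{h})$. Where truncation is active, say to the maximum, the reconstructed value equals the upper fence $\bar u_{\bar i} + \tilde K_h h |\bar u_{\bar i}|$, and the clamped quantity $\bar u_{\bar i}$ is squeezed as $I^{\mathrm{lin}}_{\Delta x} u(x) \le \bar u_{\bar i} \le I^{X_c}_{\Delta x,1} u(x)$, where $I^{\mathrm{lin}}_{\Delta x}$ is the linear interpolant on the two mesh endpoints (a convex combination of $u(y_{\bar i,0})$ and $u(y_{\bar i,1})$, hence bounded above by $\bar u_{\bar i}$) and the right inequality is precisely the activation condition of the truncation. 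Subtracting $u(x)$, the left fence is $\ge -C\Delta x^2$ and the right fence is $\le C\Delta x^4$, so $\bar u_{\bar i} - u(x)$ is trapped in $[-C\Delta x^2, C\Delta x^4]$ and the error is $O(\Delta x^2)$; adding the $O(\tilde K_h h)$ tolerance and dividing by $h$ gives $O(\frac{\Delta x^2}{h})$ together with an $O(\tilde K_h)$ already tracked in Theorem \reff{TheoConv}, which with the temporal $O(h)$ is the claimed $O(h + \frac{\Delta x^2}{h})$.

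The main obstacle is the truncated regime: one must upgrade the crude first-order bound $|u(y_{\bar i,\tilde j}) - u(x)| \le |u|_1 \Delta x$ to genuine second order. The squeezing inequality above is what achieves this, and the delicate point is to check that the lower fence $I^{\mathrm{lin}}_{\Delta x} u(x)$ and the upper fence $I^{X_c}_{\Delta x,1} u(x)$ sit on the correct sides of $u(x)$ so that the one-sided $O(\Delta x^2)$ and $O(\Delta x^4)$ bounds combine into a two-sided $O(\Delta x^2)$ control of $\hat I^{X_c}_{h, \tilde k_h, \Delta x} u(x) - u(x)$; the minimum-truncation case then follows by symmetry.
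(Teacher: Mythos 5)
Your proposal is correct and takes essentially the same route as the paper: for this proposition the paper gives no separate proof but appeals (``as before'') to the truncated Lagrange case, whose argument is exactly your decomposition of $E(u)$ into an $O(h)$ time-consistency term plus $\frac{C}{h}\,|\hat I^{X_c}_{h,\tilde k_h,\Delta x} u - u|_0$, together with the same squeeze $I^{\mathrm{lin}}_{\Delta x} u(x) \le \bar u_{\bar i} = \hat I^{X_c}_{h,\tilde k_h,\Delta x} u(x) \le I^{X_c}_{\Delta x,1} u(x)$ in truncated regions, trapping the error between a second-order and a fourth-order bound. Your explicit handling of the $\tilde K_h h$ tolerance is a small addition beyond the paper's terse treatment, but not a different method.
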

\begin{proposition}
The solution $\tilde v_{\rho}$ obtained by the interpolator   $\hat I^{X_c}_{h, \tilde k_h, \Delta x }$ converges to the viscosity solution $v$ and the convergence rate is given by:
\begin{eqnarray}
|| v - \tilde v_{\rho} ||_\infty \le    O( h^{\frac{1}{4}}) + O( \frac{ \Delta x}{ h}) + O(\tilde K_h) \nonumber
\end{eqnarray}
\end{proposition}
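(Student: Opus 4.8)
The plan is to show that the truncated cubic spline operator $\hat I^{X_c}_{h,\tilde K_h,\Delta x}$ falls within the abstract framework of Assumption \ref{HypoPoids}, after which the statement is an immediate consequence of Theorem \ref{TheoConv} (which already bundles the discrete-scheme estimate with the bound $|v-v_h|_0 \le C h^{1/4}$ of Proposition \ref{converDisH}). The argument runs in parallel with the one already carried out for the truncated Lagrange interpolator, and the role of the truncation is exactly the same. Although the underlying cubic spline $I^{X_c}_{\Delta x,1}(v)$ is a globally coupled object whose value on a mesh $M_{\bar i}$ depends on grid data lying far from that mesh, the clipping forces the reconstructed value at every $x \in M_{\bar i}$ into the interval $[\underline v_{\bar i} - \tilde K_h h |\underline v_{\bar i}|,\ \bar v_{\bar i} + \tilde K_h h |\bar v_{\bar i}|]$, an interval built only from the two local grid values $v(y_{\bar i,0})$ and $v(y_{\bar i,1})$. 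This \emph{localization through truncation} is the crux of the proof, and is what permits a weight representation supported on the grid points of $M_{\bar i}$ alone.

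First I would fix $x \in M_{\bar i}$ and split into the three cases according to whether the clipping is active. When $\underline v_{\bar i} \le \hat I^{X_c}_{h,\tilde K_h,\Delta x}(v)(x) \le \bar v_{\bar i}$, I write the value as the convex combination $\underline w\,\underline v_{\bar i} + \bar w\,\bar v_{\bar i}$ with $\underline w = (\hat I^{X_c}_{h,\tilde K_h,\Delta x}(v)(x) - \bar v_{\bar i})/(\underline v_{\bar i} - \bar v_{\bar i}) \in [0,1]$ and $\bar w = 1 - \underline w$, so the weights are positive and sum to exactly $1$. When the value is clipped below $\underline v_{\bar i}$ (respectively above $\bar v_{\bar i}$) I place all the mass on the grid point realizing $\underline v_{\bar i}$ (respectively $\bar v_{\bar i}$), with a single weight equal to the ratio $\hat I^{X_c}_{h,\tilde K_h,\Delta x}(v)(x)/\underline v_{\bar i}$ (respectively $/\bar v_{\bar i}$), which by definition of the truncation lies in $[1 - \tilde K_h h,\ 1 + \tilde K_h h]$, and weight zero on the remaining point. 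In every case this produces a positive weight family whose total mass lies in $[1 - \tilde K_h h,\ 1 + \tilde K_h h]$, so Assumption \ref{HypoPoids} holds with this $\tilde K_h$. It then only remains to invoke Theorem \ref{TheoConv} along any sequence $\rho_p = (h_p,\Delta x_p) \to (0,0)$ with $\Delta x_p/h_p \to 0$ and $h_p \le 1$, which gives both the convergence to $v$ and the announced rate $O(h^{1/4}) + O(\Delta x/h) + O(\tilde K_h)$.

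The step requiring the most care is the single-point weight construction in the degenerate situations where $\underline v_{\bar i}$ or $\bar v_{\bar i}$ vanishes or changes sign, since the ratios defining those weights must still be shown to stay positive and inside $[1 - \tilde K_h h,\ 1 + \tilde K_h h]$. This is handled by noting that clipping toward an extremal value can only occur on the side corresponding to that value, so the relevant ratio never crosses the origin, and the standing condition $\tilde K_h h < 1$ keeps it within the required interval. Everything else is a verbatim transcription of the reasoning used for the truncated Lagrange interpolator, and no separate consistency computation is needed here, since the convergence rate is inherited entirely from the abstract theorem rather than from the order of the spline.
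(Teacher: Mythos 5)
Your proposal is correct and takes essentially the same route as the paper: the paper disposes of this proposition by simply asserting that the truncated cubic spline satisfies Assumption \ref{HypoPoids} and the hypotheses of Theorem \ref{TheoConv}, relying on the identical three-case weight construction it spelled out earlier for the truncated Lagrange interpolator, which is exactly what you reproduce (including the key observation that the truncation localizes the globally coupled spline to the two grid values $\underline v_{\bar i}$, $\bar v_{\bar i}$ of the mesh $M_{\bar i}$). Your added care about the degenerate cases where $\underline v_{\bar i}$ or $\bar v_{\bar i}$ vanishes or changes sign is a detail the paper glosses over, and it does not alter the argument.
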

\subsection{Monotone cubic spline  (\cite{Jakobsen})}
It is possible to modify the cubic spline algorithm to obtain a monotone interpolation by direction (but not globally monotone) so that the interpolated function is $C_1$. It is achieved by modifying the estimated derivatives used by the spline following the Eisenstat Jackson Lewis algorithm  \cite{Eisenstat} (derived from the Fritsch-Carlson algorithm). This ensures the monotony of the interpolated function.
Debrabant and Jakobsen have modified this algorithm by relaxing the continuity of the derivative so that the  interpolation is reduced to a local problem on the mesh and adjacent cells.
This interpolation is of order 4 in the mesh if the interpolated function is monotone.
By tensorization, using  Remark 5.1 in \cite{Jakobsen}, the  non-monotone interpolator operator in \cite{Jakobsen}  in dimension $d$  can be written:
\begin{eqnarray}
I^S_{\Delta x}(f)(x) & = &  \sum_{\bar i, \tilde j } w^h_{\bar i, \tilde j  }(f)(x) f(y_{\bar i, \tilde j}) \nonumber \\
\mbox{where } & &  \mbox{ the support of  } w^h_{\bar i, \tilde j}(f) \mbox{ is } M_{\bar i} \nonumber \\
   & &  w^h_{\bar i, \tilde j}(f) \ge 0 , \nonumber \\
   & &  \sum_{\bar i, \tilde j} w^h_{\bar i, \tilde j}(f)(x) =1 
\label{InterpSpline}
\end{eqnarray} 
It is clear that the interpolator satisfies assumption \reff{HypoPoids} and the assumptions of theorem \reff{TheoConv}. As shown in \cite{Jakobsen}
\begin{proposition}
If the interpolated function $u(t-h,.)$ is monotone between grid points, the consistency error with the interpolator  $I^S_{\Delta x}$  given by equation \ref{consistency} is   $O( h  + \frac{\Delta x^{4}}{h})$ 
\end{proposition}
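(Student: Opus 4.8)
The plan is to reproduce verbatim the decomposition used in the proof of the consistency estimate for the truncated Lagrange interpolator, replacing the truncated operator by $I^S_{\Delta x}$ and invoking the high-order accuracy of the monotone cubic spline under the monotonicity hypothesis. Let $u$ be a solution of \reff{hjb} that we suppose regular. Starting from the definition \reff{consistency} and inserting $\pm (L_{a,h} u)(t-h,x)$, the triangle inequality together with the fact that $|\sup_a(\cdot) - \sup_a(\cdot)| \le \sup_a |\cdot - \cdot|$ gives
\begin{eqnarray}
E(u) & \le & \frac{1}{h}\sup_a |u(t,x) - (L_{a,h} u)(t-h,x)| \nonumber \\
 & & + \frac{1}{h}\sup_a |(L_{a,h} u)(t-h,x) - (L_{a,h} I^S_{\Delta x} u)(t-h,x)|. \nonumber
\end{eqnarray}

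First I would bound the second term. Since $L_{a,h}$ is affine in its argument and the $f_a$ contribution cancels in the difference, we have $(L_{a,h} u)(t-h,x) - (L_{a,h} I^S_{\Delta x} u)(t-h,x) = L_{a,h}(u - I^S_{\Delta x} u)(t-h,x)$, where the right-hand side only involves the convex combination of translates of $w := u - I^S_{\Delta x}u$ together with the $h c_a w$ term. Using assumption \reff{coeff} to bound $|c_a|_0$ and noting that the $\frac{1}{2q}$ weights plus the $-2 w(t,x)$ subtraction contribute a bounded multiple of $|w|_0$, one obtains $|L_{a,h}(w)(t-h,x)| \le C |w|_0$ uniformly in $a$. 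Hence the second term is bounded by $\frac{C}{h} |I^S_{\Delta x} u - u|_0$.

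Next I would control the two pieces separately. The first term is exactly the consistency error of the semi-Lagrangian time discretization against \reff{hjb}: expanding $u(t-h, \phi^{\pm}_{a,h,i})$ in Taylor series around $(t-h,x)$, the first order terms in $(\sigma_a)_i \sqrt{hq}$ cancel by the symmetry of $\phi^{+}$ and $\phi^{-}$ and the second order terms reproduce the trace term, yielding, exactly as in the truncated Lagrange case, $\frac{1}{h}\sup_a|u(t,x) - (L_{a,h}u)(t-h,x)| \le C h (| \frac{\partial^2 u}{\partial t^2}|_0 + | \frac{\partial^2 u}{\partial x^2}|_0 + | \frac{\partial^3 u}{\partial x^3}|_0 + | \frac{\partial^4 u}{\partial x^4}|_0) = O(h)$. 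For the second term, this is where the monotonicity hypothesis enters: under the assumption that $u(t-h,\cdot)$ is monotone between grid points, the Eisenstat-Jackson-Lewis derivative modification used to build $I^S_{\Delta x}$ does not degrade the nodal derivative estimates, so the interpolator retains the standard $C^1$ cubic spline accuracy $|I^S_{\Delta x} u - u|_0 = O(\Delta x^4)$, which is the order-4 statement of \cite{Jakobsen} recalled above. Consequently $\frac{C}{h}|I^S_{\Delta x} u - u|_0 = O(\frac{\Delta x^4}{h})$, and adding the two bounds gives $E(u) = O(h + \frac{\Delta x^4}{h})$.

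The routine parts are the Taylor expansion of the first term and the affine-map manipulation of $L_{a,h}$, which are literally identical to the corresponding steps already carried out for the truncated Lagrange interpolator. The one substantive point, and the place where the hypothesis is essential, is the $O(\Delta x^4)$ interpolation bound for $I^S_{\Delta x}$: without monotonicity the Fritsch-Carlson-type clipping of the slopes can replace the fourth-order-accurate nodal derivatives by merely first-order-accurate ones, collapsing the rate to $O(\Delta x^2)$ as in the truncated case. Since we are entitled to cite the order-4 property of the monotone spline from \cite{Jakobsen}, the hard part reduces to correctly invoking that the monotonicity assumption switches off the limiter so that the high-order derivative approximations survive; once that is granted, the estimate closes as above.
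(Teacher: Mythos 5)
Your proposal is correct and follows exactly the route the paper intends: the paper gives no explicit proof of this proposition (it simply writes ``As shown in \cite{Jakobsen}''), but the argument it relies on is precisely your decomposition, which reproduces verbatim the paper's own proof of the consistency estimate for the truncated Lagrange interpolator, with the interpolation-error term $\frac{C}{h}|I^S_{\Delta x}u-u|_0$ controlled by the order-4 accuracy of the monotone spline under the monotonicity hypothesis. Your closing observation that without monotonicity the limiter clips the slopes and the rate collapses to $O(\Delta x^2)$ also matches the paper's subsequent remark (via \cite{Kocic}) that in the non-monotone case the scheme reduces to a truncation with $\tilde K_h = 0$.
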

As a direct result of theorem \reff{TheoConv} we get the convergence of the scheme that was not given in 
\cite{Jakobsen} :
\begin{proposition}
The solution  $ \tilde v_{h,\Delta x,1}$ obtained by interpolator  $I^{S}_{\Delta x }$ converges to the viscosity solution of \reff{hjb} and
\begin{eqnarray}
|| v - \tilde v_{\rho} ||_\infty \le    O( h^{\frac{1}{4}}) + O( \frac{ \Delta x}{ h}) \nonumber 
\end{eqnarray}
\end{proposition}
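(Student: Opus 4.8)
The plan is to obtain this proposition as a direct corollary of Theorem \reff{TheoConv}, exactly as the authors announce. The only real work is to check that the monotone spline operator $I^S_{\Delta x}$ falls within the class of admissible approximation operators of Assumption \ref{HypoPoids}; once this is established, the stated rate is read off the conclusion of the theorem with the weight defect $\tilde K_h$ set to zero.

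First I would verify the requirements of Assumption \ref{HypoPoids}. By the representation \reff{InterpSpline}, for $x \in M_{\bar i}$ the value $I^S_{\Delta x}(f)(x)$ is a finite linear combination $\sum_{\tilde j} w^h_{\bar i, \tilde j}(f)(x) f(y_{\bar i, \tilde j})$ whose coefficients are supported on the cell $M_{\bar i}$; this is precisely the local weight form required. Next, the construction of \cite{Jakobsen} (their Remark 5.1) guarantees $w^h_{\bar i, \tilde j}(f) \ge 0$, so the weights are positive. Finally, the partition-of-unity identity $\sum_{\bar i, \tilde j} w^h_{\bar i, \tilde j}(f)(x) = 1$ shows that the sum of weights equals $1$ exactly; hence Assumption \ref{HypoPoids} holds with $\tilde K_h \equiv 0$. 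I would also note that, since the modified spline remains continuous (the Eisenstat--Jackson--Lewis / Fritsch--Carlson derivative correction preserves continuity even after the derivative matching is relaxed), $I^S_{\Delta x}$ maps the required function space into itself, so the operator is of the admissible type.

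With Assumption \ref{HypoPoids} verified and $\tilde K_h = 0$, I would then invoke Theorem \reff{TheoConv}. Along any sequence $\rho_p = (h_p, \Delta x_p) \to (0,0)$ with $\Delta x_p / h_p \to 0$ and $h_p \le 1$, the theorem produces a solution $\tilde v_{\rho_p}$ of \reff{hjbInterpGeneral} converging to the viscosity solution of \reff{hjb}, with the error estimate
\begin{eqnarray}
|\tilde v_{\rho} - v|_0 \le C\left( h_p^{\frac{1}{4}} + \frac{\Delta x_p}{h_p} + \tilde K_{h_p} \right). \nonumber
\end{eqnarray}
Substituting $\tilde K_{h_p} = 0$ collapses the right-hand side to $O(h_p^{\frac{1}{4}}) + O(\Delta x_p / h_p)$, which is exactly the claimed bound.

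The step I expect to be the only genuine obstacle is the verification that the spline weights really are nonnegative and sum to one in the multidimensional tensorized setting, i.e.\ confirming the cited properties of \reff{InterpSpline}. These are not computed in this note but imported from \cite{Jakobsen}; the nonnegativity in particular is the nontrivial consequence of the monotonicity-preserving derivative correction, and one must check that the tensorization preserves both the positivity and the normalization of the weights. Once these structural facts are granted, the remainder is purely an application of the already-proved theorem and involves no further analysis.
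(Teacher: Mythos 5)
Your proposal matches the paper's own argument exactly: the paper likewise reads the weight structure off the representation \reff{InterpSpline} (nonnegative weights supported on $M_{\bar i}$ summing to one, i.e.\ Assumption \ref{HypoPoids} with $\tilde K_h = 0$) and then cites Theorem \reff{TheoConv} directly, with the positivity and normalization of the tensorized spline weights imported from Remark 5.1 of \cite{Jakobsen} rather than re-derived. Your verification is, if anything, slightly more explicit than the paper's one-line ``it is clear'' justification, but the route is the same.
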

\begin{remark}
In fact it is shown in \cite{Kocic} that when the data is non monotone, the Fritsch-Carlson type algorithm (which has been modified to get the Eisenstat Jackson Lewis algorithm)
is only clipping the solution to the maximum of the interpolated points so is equivalent to a truncation with $\tilde K_h =0$, so in that case $\hat I^{S}_{\Delta x}=I^{S}_{\Delta x}$ . So the local consistency
error is similar to the one obtained by the other scheme developed when truncation is achieved.
\end{remark}
\subsection{Approximation with Bernstein polynomials}
The weights associated to Bernstein polynomials are positive (Appendix \reff{Annexe3}), independent on the function.  Their sum is equal to one and we get nearly all the assumptions
used by  \cite{Jakobsen} except the fact that this is not an interpolator.  By using the results given in appendix \reff{Annexe3} we deduce that
\begin{proposition}
The scheme with Bernstein approximation $B_N$ of degree $N$ (in each dimension)  converges  to the viscosity solution of \reff{hjb} with
\begin{eqnarray}
|| v - \tilde v_{\rho} ||_\infty \le    O( h^{\frac{1}{4}}) + O( \frac{ \Delta x}{ h}) \nonumber 
\end{eqnarray}
 and 
the consistency error is of order  $O(h + \frac{\Delta x^2}{N h })$.
\end{proposition}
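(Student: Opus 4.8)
The plan is to handle the two assertions—convergence with rate, and the consistency order—separately, in each case leaning on the machinery already in place for the interpolating schemes. For the convergence statement, the first step is to check that the Bernstein operator $B_N$ satisfies Assumption \reff{HypoPoids}. By the properties collected in Appendix \reff{Annexe3}, on each mesh $M_{\bar i}$ the operator can be written $B_N(f)(x) = \sum_{\tilde j \in [0,N]^d} w_{\bar i,\tilde j}(x)\, f(y_{\bar i,\tilde j})$, where the Bernstein basis weights are nonnegative, \emph{independent} of $f$, and sum to exactly one. Hence Assumption \reff{HypoPoids} holds in the degenerate case $\tilde K_h = 0$: the weights are positive, the two-sided bound $1-\tilde K_h h \le \sum_{\tilde j} w_{\bar i,\tilde j}(x) \le 1+\tilde K_h h$ collapses to the exact identity $\sum_{\tilde j} w_{\bar i,\tilde j}(x)=1$, and $B_N$ sends $C_0(Q)$ into $C_0(Q)$ since the reconstruction is a polynomial, hence continuous, on each cell.

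With the hypotheses verified, convergence and its rate follow immediately from Theorem \reff{TheoConv}: for a sequence $\rho_p=(h_p,\Delta x_p)$ with $\Delta x_p/h_p \to 0$ one obtains $|\tilde v_\rho - v|_0 \le C\bigl(h_p^{1/4} + \Delta x_p/h_p + \tilde K_{h_p}\bigr)$, and since here $\tilde K_h \equiv 0$ the last term disappears, leaving exactly $O(h^{1/4}) + O(\Delta x/h)$. No fresh estimate is needed for this part; all the work is in checking that the Bernstein weights meet the requirements of Assumption \reff{HypoPoids}.

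For the consistency error I would reuse the splitting employed for the truncated Lagrange interpolator. Writing $E(u)$ as in \reff{consistency} with $B_N$ in place of the interpolator and using Assumption \reff{coeff} exactly as before, one gets
\[
E(u) \le \frac{1}{h}\sup_a |u(t,x) - (L_{a,h}u)(t-h,x)| + \frac{C}{h}\,|B_N u - u|_0 .
\]
The first term is $O(h)$ by consistency of the Camilli--Falcone time discretization with \reff{hjb}, using the bounds on the derivatives of the (assumed regular) solution $u$. The second term is controlled by the spatial approximation error of the Bernstein operator.

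The only genuinely new ingredient—and thus the main obstacle—is the $O(\Delta x^2/N)$ bound on $|B_N u - u|_0$. In one dimension on $[0,1]$ the classical estimate $|B_N(f)(x)-f(x)| \le \tfrac{x(1-x)}{2N}\|f''\|_\infty \le \tfrac{1}{8N}\|f''\|_\infty$ holds; after rescaling the unit interval to a cell of width $\Delta x$, the second derivative picks up a factor $\Delta x^2$, giving a per-cell error $O(\Delta x^2/N)$. In dimension $d$ the operator is built by tensorization, so I would telescope over coordinates—swapping one one-dimensional Bernstein factor for the exact evaluation at a time and applying the triangle inequality—so that the multidimensional error is still $O(\Delta x^2/N)$ times a bound on the second derivatives of $u$, as supplied by Appendix \reff{Annexe3}. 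Substituting into the splitting yields $\tfrac{C}{h}\,O(\Delta x^2/N) = O(\Delta x^2/(Nh))$, which together with the $O(h)$ temporal term gives the announced order $O\!\left(h + \tfrac{\Delta x^2}{Nh}\right)$. The care required is entirely in the scaling to $\Delta x$ and in the telescoping argument for the tensor product; everything else is a direct appeal to Theorem \reff{TheoConv} and Proposition \reff{converDisH}.
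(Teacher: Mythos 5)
Your proposal is correct and follows essentially the same route as the paper: verify Assumption \ref{HypoPoids} with $\tilde K_h = 0$ using the positivity, $f$-independence and unit sum of the Bernstein weights, apply Theorem \ref{TheoConv} for the convergence rate, and combine the $O(h)$ time-consistency of the Camilli--Falcone discretization with the rescaled Bernstein error $O(\Delta x^2/N)$ to get the consistency order. The only cosmetic difference is that you re-derive the multidimensional bound by telescoping over coordinates, whereas the paper's Appendix \ref{Annexe3} already supplies the tensorized estimate $|f - B_{N,\dots,N}(f)| \le \frac{C}{N}\sum_{i}|\frac{\partial^2 f}{\partial x_i^2}|$ directly.
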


\section{Some numerical results}
In this section we focus on techniques for effective implementation of Semi Lagrangian algorithms.
We are interested in any special treatment of the boundary conditions that can avoid problems with this kind of algorithm. 
The parallelization strategy is investigated and  on numerical examples, we calculate the rate of convergence of the different methods on
conventional tests from \cite{Jakobsen} and \cite{Zidani}.  We eventually use numerical tests with unbounded controls to show that
the methods work even outside the theoretical framework of convergence.\\
We insist that in our tests any meshes are taken: in particular, the discretizations  do not respect the monotony of functions and discretization parameters are not chosen so that the approximation points are inside the domain.
 If the value of a function must be estimated outside the domain, the scheme is amended as indicated in the following paragraph. If no change is possible we truncate the solution projected on the edge of the domain.
The order of the estimate may be lower than the theoretical one or the one given by \cite{Jakobsen} but closer to a real use of the schemes.

\subsection{Boundary conditions}
\label{boundsection}
The boundary conditions are often problematic for PDEs and their treatment by the Semi Lagrangian methods exacerbates the problem.
Indeed, if for example we solve a problem with $ b $ and $ \sigma $ constant for simplicity and if $ x $ is a mesh point near the edge then $ x + bh + \sqrt{h} \sigma $ can be out of the domain resolution. This problem occurs if a point is too close to the edge and if the volatility is too large or the time steps too small.
A first possibility which is quite natural is to interpolate the solution outside the domain or to set  it to a given value.
The interpolation is to be avoided  as much as possible because it causes oscillations that can explode during resolution.
The first trick is to modify the schema. You can often avoid fetching points outside the area by changing the points sought by the interpolation.
In Figure \reff{Boundfig} we show how a 1D scheme starting from a point $x$ may need a point out of the domain.
\begin{figure}[h]
\centering
\includegraphics[width=8cm]{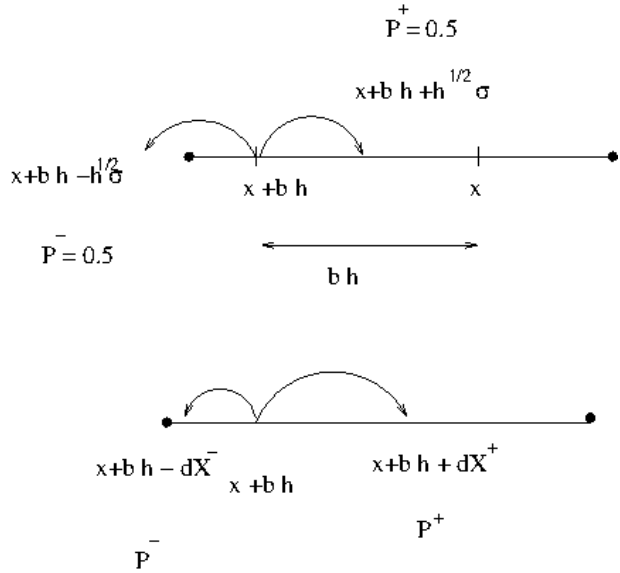}
\caption{Modification of the scheme for boundary conditions}
\label{Boundfig}
\end{figure}
The points used can be modified (respecting 'mean' and 'variance'). We denote $dX^{-}$ the difference between $x+b h$ and the point reached by below and $dX^{+}$
the distance with the point reached by above. 
If the scheme is not modified, 
$dX^{+} =dX^{-} = \sigma \sqrt{h}$ 
and the  'probability' to reach these  points are  
$P^{-}=P^{+} = \frac{1}{2}$.
If a value has to be interpolated outside the domain, new weights and new interpolation points inside the domain are calculated respecting
\begin{eqnarray}
dX^{+} dX^{-} & =&  \sigma^2 h \nonumber  \\
P^{+} &=&  \frac{\sigma^2 h}{(dX^{+})^2 +\sigma^2 h}   \nonumber \\
P^{-} &=&  \frac{(dX^{+})^2}{(dX^{+})^2 +\sigma^2 h}  \nonumber
\end{eqnarray}
In the general case this modification of the ``probabilities'' force us to modify the interpolation point in the other directions.
In the corners of the domain the modification thus can be impossible and some kind of extrapolation has to be used.
\begin{remark}
The Bonnans Zidani method has the same flaw: when the scheme needs some points outside the domain, the consistency or the monotonicity has to be relaxed.
\end{remark}
\begin{remark}
The use of this methods clearly doesn't satisfy   the assumption in \cite{Jakobsen} and their results should be adapted to get convergence results similar to the one obtained in proposition \ref{converDisH}.
The consitency error due to time discretization  (solving equation \ref{hjbCamilli}) cannot be better than $h^{\frac{1}{2}}$.
\end{remark}

\subsection{Parallelization technique}
In order to solve a stochastic control problems in high dimension (3 or above) parallelization techniques are required.
Of course thread parallelization can be easily added to these techniques.
Suppose that we have 4 processors and that the grid of points is split between processor (figure  \reff{Parallfig}). At the initial date, each processor
has its own data (the initial solution). At the first time step, each processor needs some data owned by other processors : some values
needed by the interpolation. The control being bounded one can determine the envelop of  the points needed by the processor.
On figure  \reff{Parallfig}, we give the data needed by processor 3 for its optimization.
\begin{figure}[h]
\centering
\includegraphics[width=8cm]{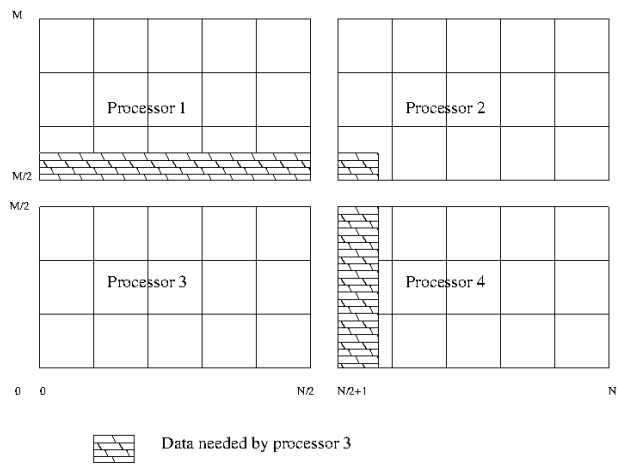}
\caption{Data to send to processor  3}
\label{Parallfig}
\end{figure}
Then some MPI communications are realized once at each time step.
This algorithm is very effective because communications are just achieved only once at each time step and negligible in time spent.
It has already been proved to be very effective till thousand of processors in dynamic programming problem in high dimension \cite{warinHPC1}, \cite{warinHPC2}.
Although it is specialy effective in high dimension permitting to tackle $4$ dimensional problems, it remains very attractive even on two dimension problems.
In figure below, we report acceleration obtained for test case \reff{mpiTestCase} below on figure \reff{mpifig} in the special case where we have a number of time steps is equal to $100$, a number of mesh equal to $200$ in each direction and a linear interpolator.
\begin{figure}[h]
\centering
\includegraphics[width=6cm]{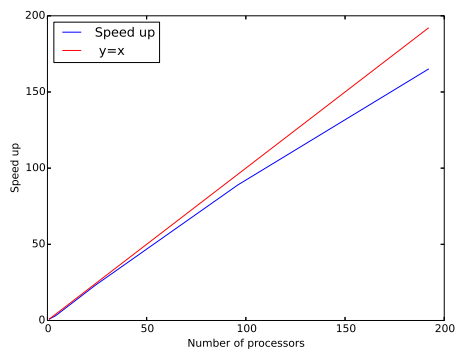}
\caption{Mpi acceleration for Semi-Lagragian Schemes for a 2D problem}
\label{mpifig}
\end{figure}

\subsection{Some test cases}
In this section we give some results for the explicit scheme with polynomial approximation of Bernstein type (BERN i where i
is the degree of the polynomial), with linear interpolation (LIN), with Chebyshev interpolation (TCHEB i), with Legendre interpolation (LEGEND i), 
with cubic splines (CUBIC), with monotone cubic splines (MPCSL following the name given by \cite{Jakobsen}).
The first 3  are taken from the literature, but by extending the domain of resolution to get  highly non-monotone solutions.
The two last do not fit into the framework of the theory because the condition given by equation \ref{coeff} is not verified. They are
however interesting because the methods developed are effective.
Unlike \cite{Jakobsen}, we  chose to set the same time step, and set the same control discretization for all the methods  and all the space discretization in a given test case.
By converging in space very thinly we should get some residual errors due to these fixed discretizations.
All truncation are achieved with  $K_h=0$. In the tables, NbM correspond to the number of mesh per direction, Err the error with respect to the analytical or reference solution in infinite norm,
Time correspond to the CPU times for the resolution, and Rate to the order of convergence numerically calculated: if $Err(n)$ is the error obtained with  $n$ meshes per direction,
the order of convergence with $4n$ meshes is given by $ \frac{log((Err(n)-Err(2n))/(Err(2n)-Err(4n))}{log2}$.
As for the boundary treatment, extrapolation outside the domain is used at points when  the methodology in section \ref{boundsection} is impossible to use.
In each test case, the function $g$ is obtained by taking the analytical solution with $t=0$.

\subsubsection{First test case without control \cite{Jakobsen}}
Coefficients are:
\begin{eqnarray}
f_a(t,x) & =& \sin x_1 \sin x_2 ((1+ 2 \beta^2)(2-t)-1)  \nonumber \\  
&  & -2 (2-t) \cos x_1 \cos x_2 \sin(x_1 + x_2) \cos(x_1+x_2)  \nonumber \\
c_a(t,x)  &= & 0,  \quad  b_a(t,x)= 0 \quad \sigma_a(t,x)  = \sqrt{2}  \left  ( \begin{array}{lll}
                                                                 \sin(x_1+x_2) & \beta & 0  \\
                                                                 \cos(x_1 +x_2) &  0 & \beta 
                                                                 \end{array}
                                                                 \right )  \nonumber 
\end{eqnarray}
We take  $\beta=0.1$ and solve the problem on   $Q = (0,1] \times [- 2 \pi, 2 \pi]^2$.
The analytical solution is  $u(t,x) = (2-t) \sin x_1 \sin x_2$. The number of time steps is equal to 2000 so $h=5e-4$. Considering the results in table \reff{Case1}, we can conclude that
on a regular linear problem:
\begin{itemize}
\item The order of convergence of the linear approximation is  below  $2$. Because the solution is smooth we could hope to get a rate of convergence equal to $2$ the consistency error for the LINEAR scheme : in fact we only get a rate equal to $1.3$ certain due to the boundary treatment.
 For  LEGEND of degree $2$,  the rate of convergence observed is  $3$ : so it is equal to the consistency rate observed certainly indicating that the truncation is not achieved. With the discretization tested
the boundary condition doesn't seem to perturb the solution.
For  CUBIC, MPCSL, TCHEB, LEGEND of degre 3  the rate of convergence is roughly $4$ but with some oscillations indicating that some truncations are achieved.
\item The cost of Chebyshev is twice the cost of the Legendre polynomials: a analysis shows that this is due to trigonometric functions that are costly in time.
\item The use of monotone spline is not superior to classical spline approximation with truncation.
\item Bernstein polynomial are not competive 
\item The three most effective schemes are the  CUBIC, MPCSL  and LEGEND with degree 2.
\end{itemize}
\begin{table}
\small
\caption{Test case 1  } 
\label{Case1}

\scalebox{0.6}{
 \begin{tabular}{|c|c|c|c|c|c|c|c|c|c|c|c|c|c|c|c|}
\hline
 \multicolumn{4}{c|}{LINEAR} & \multicolumn{4}{c|}{CUBIC} & \multicolumn{4}{c|}{MPCSL}  & \multicolumn{4}{c|}{TCHEB 3}   \\
\hline
 NbM &  Err & Rate  & Time  &  NbM &  Err & Rate  & Time  & NbM &  Err & Rate  & Time    & NbM &  Err & Rate  & Time     \\
\hline
240         &   0.310  &        &   112     &  20    & 0.461   &        &  2    & 20  &   0.815     &        &  3     &  20    &  0.165   &            &     31   \\ 
480         &   0.119  &        &   448     &  40    & 0.037   &         &  10   & 40  &   0.166     &        &  10   &  40    & 0.0086   &            &   133  \\ 
960         &   0.040  & 1.26   &  1815     &  80    & 0.005   & 2.84   &  41   & 80  &   0.007     & 4.62   &  42    &  80    & 0.00108  &    4.37    &  552    \\
1920        &  0.0075  & 1.29   &  7334     &  160   & 0.0005   & 4.52  &  165  & 160 &  0.0005     & 4.88   &  170  &  160    & 0.0003 &  3.40      & 2246\\
\hline
 \multicolumn{4}{c|}{LEGEND 2} & \multicolumn{4}{c|}{LEGEND 3} &   \multicolumn{4}{c|}{BERN 2} & \multicolumn{4}{c|}{BERN 3}\\
\hline
 NbM &  Err & Rate  & Time  &  NbM &  Err & Rate  & Time &  NbM &  Err & Rate  & Time  &  NbM &  Err & Rate  & Time  \\
\hline
 20       &  0.059    &           &  6    &  20      & 0.165   &    &  21   &    120       & 0.643   &             &   718     &  120  & 0.5528      &    &   2811 \\ 
 40       &  0.0069   &           &  25   &  40      & 0.0085   &   &  92    &   240       & 0.227   &         &  2878     &  240  & 0.1784     &   &  11292   \\ 
 80      &  0.0010   &   3.15    &  104  &  80      & 0.00107  &   4.38      &  380   &   480       & 0.077   &  1.47       &  11551    &  480  & 0.0557     &  1.60  &  45446    \\
160      &  0.0003   &   3.00    &  420  &  160     & 0.0003   &   3.41      &  1547  &     960       & 0.021   &  1.42       &  46467    &  960     & 0.01532           &  1.60    & 181897  \\
\hline

\end{tabular}

}
\end{table}

\subsubsection{A second test case without control \cite{Jakobsen}}
Its solution is not regular 
\begin{eqnarray}
u(t,x) &= & (1+t) \sin(\frac{x_2}{2}) \left \{ \begin{array}{ll}
                                              \sin \frac{x_1}{2}  &  \mbox{  for }  -2 \pi < x_1 < 0 \\
                                              \sin  \frac{x_1}{4}  &  \mbox{  for }  0 < x_1 < 2 \pi \\
                                              \end{array} \right.  \nonumber 
\end{eqnarray}
with 
\begin{eqnarray}
f_a(t,x) & =& \sin \frac{x_2}{2}  \left \{ \begin{array}{ll}
                     \sin \frac{ x_1}{2}  (1+ \frac{1+t}{4}) (\sin^2 x_1 + \sin^2 x_2)  &  \mbox{ for } -2 \pi < x_1 < 0 \\ 
                     \sin  \frac{ x_1}{4} (1+ \frac{1+t}{16}) (\sin^2 x_1 + 4 \sin^2 x_2)  &  \mbox{ for }  0   < x_1 < 2 \pi
                                          \end{array}
                                 \right .   \nonumber  \\
       &   & - \sin x_1 \sin x_2 \cos  \frac{x_2}{2}  \left \{ \begin{array}{ll}
                             \frac{1+t}{2} \cos \frac{ x_1}{2}  &  \mbox{ for } -2 \pi < x_1 < 0 \\ 
                             \frac{1+t}{4} \cos \frac{ x_1}{4}   &  \mbox{ for }  0   < x_1 < 2 \pi \\
                                           \end{array}
                                                   \right .  \nonumber  \\
c_a(t,x)  &= & 0,  \quad  b_a(t,x)= 0 \quad \sigma_a(t,x) =  \sqrt{2}  \left( \begin{array}{l}
                                                                             \sin x_1  \\
                                                                             \sin x_2  
                                                                 \end{array} 
                                                                 \right )  \nonumber 
\end{eqnarray}
On take  $Q = (0,1] \times [- 2 \pi, 2 \pi]^2$, the number of time step is equal to 2000 so $h=5e-4$.
Our previous results are confirmed and here Lagrange polynomial of degree two are the most effective.
Note that CUBIC and  MPCSL give the same results for these discretizations (it is not true for more coarse discretizations not given here). 
As expected the convergence rate dropped due to singularity to high order schemes. But even in this case the high order scheme remains far more effective.

\begin{table}
\caption{Test case 2   } 
\label{Case2}

\scalebox{0.6}{
 \begin{tabular}{|c|c|c|c|c|c|c|c|c|c|c|c|c|c|c|c|}
\hline
 \multicolumn{4}{c|}{LINEAR} & \multicolumn{4}{c|}{CUBIC} & \multicolumn{4}{c|}{MPCSL}&  \multicolumn{4}{c|}{TCHEB 3} \\
\hline
 NbM &  Err & Rate  & Time  &  NbM &  Err & Rate  & Time  & NbM &  Err & Rate  & Time & NbM &  Err & Rate  & Time \\
\hline
640         & 0.038   &         & 557    & 80     &  0.00875   &        & 17   &  80     & 0.00875   &          &  18  &  20       &    0.0136   &        & 12      \\ 
1280        & 0.013   &         & 2240   & 160    &  0.00439   &       & 70   &  160    & 0.00439   &           &  72  &   40  &    0.00398  &    &  51 \\ 
2560        & 0.0070  &   1.84 & 8662   & 320    &  0.00220   & 0.99  & 285    &  320    & 0.00220   &   0.99  & 288   &   80  &    0.00128       &  1.84  & 212  \\
5120        & 0.0035  &   0.99 & 34820  & 640    &  0.00110   & 0.99  & 1177 &  640    & 0.00110   &   1.00     & 1221   &  160 &    0.0005       &  1.97  & 858    \\
\hline
 \multicolumn{4}{c|}{LEGEND 2} & \multicolumn{4}{c|}{LEGEND 3} & \multicolumn{4}{c|}{BERN 2} & \multicolumn{4}{c|}{BERN 3} \\
\hline
 NbM &  Err & Rate  & Time  &  NbM &  Err & Rate  & Time  &  NbM &  Err & Rate  & Time  &  NbM &  Err & Rate  & Time  \\
\hline
 20       &    0.01422      &        &  2   &  20       &   0.0137     &    &    9      &    80       &  0.3774   &              &  112   &  80      & 0.3144  &       & 429         \\ 
 40       &    0.00411      &        & 11   &  40       &   0.0040     &    &   39       &    160       &  0.1889  &         &  448   &  160     &  0.135  &    & 1734   \\ 
  80       &    0.00132      &   1.85    & 46   &  80       &   0.00129 &  1.84  &  160  &  320       &  0.066   &   1.80      &  1794  &  320     &  0.0460 &  1.43 & 6937        \\ 
  160 &    0.0006           &    1.96 & 184 &     160       &  0.0006   &   1.96 &  649  &   640       &  0.0186  &   1.89       &  7197  &  640     &  0.0128 &  1.80 &  27892   \\
\hline
\end{tabular}
}
\end{table}
 In order to check that the singularity was slowing the convergence rate, the domain $D$ has been split into two parts. First  part $D_1$ (singularity area) is  for $x_1 \in [ -\frac{\pi}{8},  \frac{\pi}{8}]$, while the second is $D_2 = D \setminus D_{1}$. The error  and the convergence rate have been calculated for the two domains for LINEAR and CUBIC approximations in table \reff{Case2Sing}. As for the LINEAR scheme, the error remains mainly higher in the $D_2$ domain explaining why the global rate of convergence remains high. As for the CUBIC scheme, the error remains always lower in the $D_2$ domain and all the rate of convergence of the $D_1$ domain correspond to the rate of convergence of the global domain.
 \begin{table}
\caption{Test case 2 : error near and far away the singularity  } 
\label{Case2Sing}
 \begin{tabular}{|c|c|c|c|c|c|c|c|c|c|}
\hline
 \multicolumn{5}{|c|}{LINEAR} & \multicolumn{5}{|c|}{CUBIC} \\
\hline
    &  \multicolumn{2}{|c|}{$D_1$} &  \multicolumn{2}{|c|}{$D_2$}  &   & \multicolumn{2}{|c|}{$D_1$} &  \multicolumn{2}{|c|}{$D_2$} \\
\hline
 NbM & Err  & Rate   & Err & Rate & NbM & Err  & Rate  & Err & Rate\\
80   & 0.12  &       & 1.21 &           &  40    &  0.017  &       & 0.010   &        \\
160  & 0.085 &       & 0.36 &           &   80   &  0.0087 &       &  0.0033  &   \\
320  & 0.052 &  0.16 & 0.14 &  0.55     &  160   &  0.0043 &  0.96     &  0.00086   &  1.60   \\
640  & 0.027 &  0.46 & 0.038 &  1.15    &  320   &  0.0022 &  0.99     &  0.0005    &  2.73   \\
1280  & 0.013 &  0.85 & 0.010 &  1.83   &  640   &  0.0011 &  0.99     &  0.0005     &        \\
\hline
\end{tabular}
\end{table}

\subsubsection{Control problem with a regular solution \cite{Jakobsen}, \cite{Zidani}}
\label{mpiTestCase}
The regular solution is given by 
\begin{eqnarray}
 u(t,x_1,x_2 ) = (\frac{3}{2} -t ) \sin x_1 \sin x_2 \nonumber 
\end{eqnarray}
Coefficients are given by 
\begin{eqnarray}
f_a(t,x) & =& (\frac{1}{2}-t) \sin x_1 \sin x_2  +  (\frac{3}{2}-t) \left[ \sqrt{ \cos^2 x_1 \sin^2 x_2 + \sin^2 x_1 \cos^2 x_2} \right. \nonumber \\
         &  & \left . - 2 \sin(x_1+x_2) \cos(x_1+x_2) \cos x_1 \cos x_2 \right]  \nonumber  \\
c_a(t,x)  &= & 0,  \quad  b_a(t,x)=  a \quad \sigma_a(t,x) =  \sqrt{2}  \left( \begin{array}{l}
                                                                             \sin(x_1+x_2)  \\
                                                                             \cos(x_1+x_2)  
                                                                 \end{array} 
                                                                 \right ), \nonumber  \\
   \mathop{A}& = &  \{ a \in \R^2 : a_1^2 + a_2^2 =1 \} \nonumber  
\end{eqnarray}
$Q = (0,1] \times [-\pi, \pi]^2$ and the number of time steps is equal to 1000 so $h=1e-3$, the number of control equal to 4000.
CPU times are given for a number of core equal to 192. Once again the quadratic approximation Legend 2 is the most effective.
\begin{table}
\caption{Test case 3  } \label{Case3}
\scalebox{0.6}{
 \begin{tabular}{|c|c|c|c|c|c|c|c|c|c|c|c|c|c|c|c|c|c|c|c|}
\hline
 \multicolumn{4}{c|}{LINEAR} & \multicolumn{4}{c|}{CUBIC} & \multicolumn{4}{c|}{MPCSL} &  \multicolumn{4}{c|}{TCHEB 3} \\
\hline
 NbM &  Err & Rate  & Time  &  NbM &  Err & Rate  & Time  & NbM &  Err & Rate  & Time & NbM &  Err & Rate  & Time   \\
\hline
80  & 0.59  &          & 237     &    10   &  0.312   &       & 19  &  10     & 0.688   &        &  30  &   8  & 0.0986  &   &47  \\ 
160 & 0.147 &          & 850     &    20   &  0.0499  &      & 30  &  20     & 0.050   &         &  29  &   16 & 0.0119  &  & 184  \\ 
320 & 0.044 & 2.09    & 3334    &    40   &  0.0072  & 2.61  & 96  &  40     & 0.0064  & 3.86   &  98  &  32 & 0.0012 &  3.01  & 735    \\
640 & 0.014 & 1.77    & 13259   &    80   &  0.001   & 2.78  & 384 &  80     & 0.001   & 3.03   &  387 &   64 & 0.0008 & 4.94  & 2944   \\
\hline
 \multicolumn{4}{c|}{LEGEND 2} & \multicolumn{4}{c|}{LEGEND 3} & \multicolumn{4}{c|}{BERN 2} & \multicolumn{4}{c|}{BERN 3} \\
\hline
 NbM &  Err & Rate  & Time  &  NbM &  Err & Rate  & Time  & NbM &  Err & Rate  & Time  &  NbM &  Err & Rate  & Time  \\
\hline
 8       &  0.0710       &        & 14   & 8   &  0.0988     &       & 31       &    20       & 0.7479    &        &  181     &  20    & 0.769  &          & 758  \\ 
16      &  0.0094       &         & 49   & 16  &  0.0117     &    &  116     & 40       & 0.706     &       &  789     &  40    & 0.5898 &          & 2362 \\ 
32      &  0.0023       &  3.11   & 149   & 32  &  0.0011    &  3.04  &  465     &     80       & 0.3210    & 0.62  & 2533     &  80    & 0.2334 &  0.98  &  9436  \\ 
64      &  0.0009       &  2.36   & 590   & 64  & 0.0010     &   6.24 &  1854 &  160      & 0.0801    & 2.09  & 10111    &  160   & 0.0563 &  1.00  & 37750  \\
 \hline
\end{tabular}
}
\end{table}

\subsubsection{One dimensional optimization problem with unbounded control}
\label{stochasticTarget1D}
The theory is developed for bounded controls. One may wonder if we are able to solve problems with unbounded control.
We are interested in a stochastic target problem where  we want to drive a portfolio towards the value $1$ at $T$ with a given probability $x$.
The asset used for investment satisfies :
\begin{eqnarray*}
dS_t &= & \mu dt + \kappa dW_t
\end{eqnarray*}
Supposing a null interest rate, the wealth process of an investor investing in bond and the asset follows
\begin{eqnarray*}
dX^{\theta}_t & = & \theta_t  \mu dt + \theta_t \kappa dW_t
\end{eqnarray*}
where $\theta_t$ is the investor strategy.\\
Using the methodology developped in \cite{Bouchard}, the minimal value $u$  at date $t$  of the initial portfolio  
 to reach a target $1$ at date $T$ with probability $x$  satisfies by Ito lemma  for a smooth $u$, $s \ge t$ :
\begin{equation}
\left \{ 
\begin{array}{ccl}
du(s,p_s^{t,x,\alpha}) & = &  [ \frac{\partial u}{\partial t} + \frac{\alpha_s^2}{2} \frac{\partial^2 u}{\partial x^2}](s,p_s^{t,x,\alpha}) ds + \alpha_s \frac{\partial u}{\partial x}(s,p_s^{t,x,\alpha}) dW_s,\\
d P_s^{t,x,\alpha}    & = & \alpha_s dW_s \\
P^{t,x,\alpha}_t &= & x 
\end{array}  \right. \nonumber 
\end{equation}
The HJB equation is obtained by imposing that the variation of the wealth  is equal to the variation of $u(s,p_s^{t,x,\alpha})$ :
\begin{eqnarray*}
inf_{\alpha \frac{\partial u}{\partial x} = \kappa \theta} [ \frac{\partial u}{\partial t}(t,x) + \frac{\alpha^2}{2} \frac{\partial^2 u}{\partial x^2}(t,x) - \mu \theta] =0 \mbox{ for} (t,x) \in [0,T] \times [0,1]
\end{eqnarray*}
The final condition is  obviously given by $u(T,x)=x$.
So setting $T=1$, $\mu =0.1$,  $\kappa = 0.1$,    $u$ is the  solution  of equation \reff{hjb} with :
\begin{eqnarray}
f_a(t,x) & =& 0,  c_a(t,x)  =  - \mu \theta, \quad \sigma_a(t,x) = \alpha,  \mathop{A}(u) = \{ a=(\alpha, \theta) \in \R^2, \alpha u_x  = \kappa \theta  \}  \nonumber\\
u(0,x)  & =& x,  Q =  (0,1] \times [0,1] \nonumber
\end{eqnarray}
Using the first order condition, the function $u$ satifies
\begin{eqnarray*}
 \frac{\partial u}{\partial t}(t,x) - \frac{\mu^2}{2\kappa^2} \frac{(\frac{\partial u}{\partial x})^2}{\frac{\partial^2 u}{\partial x^2}} =0
\end{eqnarray*}
The Fenchel transform of $u$ , $v(t,q) = \sup_{x\in[0,1]}\{ xq -u(t,x)\}$ satisfies
\begin{equation}
\left \{ 
\begin{array}{l}
\frac{\partial v}{\partial t}(t,q) + \frac{\mu^2}{2\kappa^2} \frac{\partial^2 v}{\partial^2 q}(t,q) =0   \mbox{   for} (t,x) \in [0,T] \times \R \\
v(T,q) = (q-1)^{+}
\end{array}  \right. \nonumber 
\end{equation}
Using Feyman Kac, $v$ is the price of an European call where the asset follows the dynamic:
\begin{eqnarray*}
dQ_t = \frac{\mu}{\kappa} Q_t dW_t
\end{eqnarray*}
Using Black Scholes formulae and taking the dual of $v$ (so the bi-dual of $u$), we get the
analytical solution 
\begin{eqnarray}
u(t,x)& = & N ( N^{-1}(x) + \frac{\mu}{\kappa} \sqrt{T-t}) , \nonumber \\
N(x) & = & \frac{1}{\sqrt{2 \pi}} \int_{-\infty}^{x} e^{-\frac{u^2}{2}} du \nonumber
\end{eqnarray}
The set of controls depends on the solution. Numerically $a$ is bounded so that the diffusion coefficients don't explode.
We use the solution calculated at the previous time step  $u(t-h,x)$  to estimate  $ \mathop{A}(u(t,.)) \simeq \mathop{A}(u(t-h,.))$.\\
The solutions obtained for the various schemes are given in Table \reff{Case4}.
The controls are bounded to  $16$, the number of controls tested is equal to 8000 and the number of time steps is taken equal to 1600 so $h=6.25e-4$.
CPU times are given for $48$ cores used.
All the methods have similar convergence rate but for very coarse meshes high order schemes are far more effective. On the finer meshes
used, LEGEND 2, CUBIC and MPCSL still give the best results considering the error versus the computing time.
\begin{table}
\caption{Test case 4  } \label{Case4}
\scalebox{0.6}{
 \begin{tabular}{|c|c|c|c|c|c|c|c|c|c|c|c|c|c|c|c|}
\hline
 \multicolumn{4}{c|}{LINEAR} & \multicolumn{4}{c|}{CUBIC} & \multicolumn{4}{c|}{MPCSL} & \multicolumn{4}{c|}{TCHEB 3}  \\
\hline
 NbM &  Err & Rate  & Time  &  NbM &  Err & Rate  & Time  & NbM &  Err & Rate  & Time &  NbM &  Err & Rate  & Time  \\
\hline
200         &  0.0445   &        &  45      &  80     & 0.023   &        &  7    &  80    & 0.0234     &        &  8    & 20     & 0.0503   &    &   14    \\ 
400         &  0.0249   &        &  81     &  160    & 0.0143  &         &  13   &  160   & 0.0143     &         &  12    & 40     & 0.032   &   &  27    \\ 
800         &  0.014    &  0.83  &  157     &  320    & 0.00879  & 0.66  &  29   &  320   & 0.0088     & 0.72    & 28   & 80     &  0.020     & 0.61   &  46  \\
1600        &  0.0078   &  0.81  &  307     &  640    & 0.00529 & 0.66  &  55   &  640   &  0.0052    & 0.62     & 52    & 160     & 0.0125    &  0.68  & 95\\
3200        &  0.004    &  0.71  &  612     &  1280   & 0.00314 & 0.70  &  113   &  1280  &  0.0031    & 0.77    &  108 & 320     & 0.0075   &  0.59  &  180   \\
\hline
 \multicolumn{4}{c|}{LEGEND 2} & \multicolumn{4}{c|}{LEGEND 3} & \multicolumn{4}{c|}{BERN 2} & \multicolumn{4}{c|}{BERN 3} \\
\hline
 NbM &  Err & Rate  & Time  &  NbM &  Err & Rate  & Time  & NbM &  Err & Rate  & Time  &  NbM &  Err & Rate  & Time \\
\hline
 20       &  0.054   &            &   3      &  20      & 0.0520   &         &  7   &  100       & 0.0575    &         &    34    &  100       & 0.049   &          &   61 \\ 
 40       &  0.034   &            &   7      &  40      & 0.033    &         &  10  & 200       & 0.032     &          &    63    &   200      &  0.0281  &        &   112 \\ 
 80       &  0.021   &   0.62     &   13      & 80      & 0.020    & 0.55    &  18  &  400       & 0.0187    &   0.93     &    117   &   400      &  0.016  &  0.79     &   217 \\ 
 160      &  0.013   &   0.70     &   23      & 160     & 0.0127   & 0.85    &  34  &  800       & 0.0106    &   0.71     &    235   &   800      &  0.009  &  0.79     &   435  \\
 320      &  0.007   &   0.41     &   47      & 320     & 0.0077   & 0.55    &  70  &  1600      & 0.006     &   0.81     &    464   &   1600      & 0.0052   & 0.88   &  867  \\
\hline
\end{tabular}
}
 \end{table}

\subsubsection{A 2D dimensional control problem}
We use here the stochastic target problem from \cite{Bouchard}. 
Coefficients are given by:
\begin{eqnarray}
f_a(t,x) & =& 0,  c_a(t,x)  =  0,  b_a(t,x) = \left( \begin{array}{l}
                                                        - \frac{\kappa^2}{2} \\
                                                        - \frac{\mu}{\kappa} a
                                                        \end{array} \right), \nonumber \\ 
\sigma_a(t,x) & = & \left (  \begin{array}{l} 
                              \kappa  \\
                               a      
                             \end{array}
                       \right),  \mathop{A} = \{ a \in \R \}, Q =  (0,1] \times [-3,3] \times [0,1] \nonumber 
\end{eqnarray}
The initial condition for a European call with strike $K$ is given by:
\begin{eqnarray}
g(x_1,x_2) & =& x_2 (S_0 e^{x_1}-K)^{+}  \nonumber
\end{eqnarray}
We take  $\kappa = 0.4$, $\mu =1$, $S_0=K=1$. The value function $u$ is convex in $x_2$. Its Legendre-Fenchel transform  $u^*$ can be estimated by Monte Carlo method and we numerically calculate our reference solution  $u = u^{**}$ with
\begin{eqnarray}
u(t,x_1,x_2)& = & \max_{q} \left[ x_2 q - \mathbb{E} \left[ (q e^{-\frac{\mu^2}{2 \kappa^2} + \frac{\mu}{\kappa} g} - (x_1 e^{ -\frac{\kappa^2}{2} + \kappa g })^{+})^{+} \right] \right] \nonumber
\nonumber \end{eqnarray}
and $g \sim    \mathbf{N}(0,1)$.
Numerically we have to truncate the domain in $x_1$. The maximum control is truncated to $10$ and discretized with 2000 values.
The number of time steps is equal to 1600 So $h=6.25e-4$. We give the error on a sub domain of the domain of resolution $[-1.6,1.6]\times [0,1]$.
CPU times are given for 192 cores. Similarly to the previous case, the higher order scheme are not superior to the LINEAR scheme in term of rate of convergence
but for coarse meshes the higher order schemes are clearly superior.
\begin{table}
\caption{Test case 5  } \label{Case5}
\scalebox{0.6}{
 \begin{tabular}{|c|c|c|c|c|c|c|c|c|c|c|c|c|c|c|c|}
\hline
 \multicolumn{4}{c|}{LINEAR} & \multicolumn{4}{c|}{CUBIC} & \multicolumn{4}{c|}{MPCSL} &  \multicolumn{4}{c|}{TCHEB 3}\\
\hline
 NbM &  Err & Rate  & Time  &  NbM &  Err & Rate  & Time  & NbM &  Err & Rate  & Time & NbM &  Err & Rate  & Time  \\
\hline
  40       & 1.21   &               &   21     &  10   & 0.85   &            & 7   & 10   & 0.341   &         &  6        &   10   & 0.231  &    &       41   \\ 
  80       & 1.008  &               &   86     &  20   & 0.086  &           & 10  & 20   & 0.0746   &         &  10 &    20    & 0.077  &   &   127  \\ 
  160       & 0.619  &              &   285    &  40   & 0.045  & 4.23      & 33  & 40   & 0.118   &          &  33  &   40    & 0.224  &    &  394   \\ 
  320       &  0.319  &  0.36       &   1176   &  80   & 0.041  & 3.33      & 131 & 80   & 0.041   &          &  129 &   80   &  0.175  &   &  1577   \\ 
  640       &  0.192  &  1.24       &   4647   & 160   & 0.038  & 0.55      & 455 & 160  & 0.039   &  5.48    &  450 &   160      &  0.093    &   -0.72     &  6312 \\ 
\hline
 \multicolumn{4}{c|}{LEGEND 2} & \multicolumn{4}{c|}{LEGEND 3} & \multicolumn{4}{c|}{BERN 2} & \multicolumn{4}{c|}{BERN 3} \\
\hline
 NbM &  Err & Rate  & Time  &  NbM &  Err & Rate  & Time  &  NbM &  Err & Rate  & Time  &  NbM &  Err & Rate  & Time  \\
\hline
 10       &  0.032  &             &  6     &  10   &  0.035  &                & 30  &   40       &   0.417        &          & 266    &  40    & 0.395   &          &  916     \\ 
 20       &  0.017  &        &  21     & 20    &  0.019   &             & 87  &   80       &  0.2950  &         &  929    &  80    &  0.2647  &      &   3657  \\ 
 40       &   0.013 & 1.96   &  83     & 40    &  0.0155  &  2.12       & 270  &  160       & 0.1785  &    0.07     &  3711   &  160    & 0.1336  &   0.01 &  14623    \\ 
 80       &   0.014 &        &  293    & 80    &  0.0132  &  0.66       & 1077  &   320       & 0.069   &   0.08       & 14530   &     &         &            &   \\ 
 \hline
\end{tabular}
}
\end{table}
\section{Appendix}
\subsection{Some results on Lagrange interpolators} 
\label{Annexe1}
For a $d$ dimensional grid $X =X_{N}^d$, the interpolation operator is the composition of interpolators 
 $ I^{X}_{N}(f)(x) =  I^{X_{N},1}_{N} \times I^{X_{N},2}_{N} ... \times I^{X_{N},d}_{N} (f)(x)$ where  $I^{X_{N},i}_{N}$ is the one dimensional interpolator in direction $i$.\\
If we divide the domain  $I =[a_1,b_1] \times  .. \times [a_d,b_d] $ in meshes  $ \Delta x = (\Delta x_1,\Delta x_2..,\Delta x_d)$ such that  
$$ M_{i_1,..i_d} = [a_1 + i_1 \Delta x_1, a_1 + (i_1+1) \Delta x_1] \times .. \times  [a_d + i_d \Delta x_d, a_d + (i_d+1) \Delta x_d]$$
and if a Lagrange interpolation is used on each mesh  $M_{i_1,...,i_d}$  for the function  $g(x)=  f(a_1 + i_1 \Delta x_1 + (x_1+1) \frac{\Delta x_1}{2},..,a_d + i_d \Delta x_d + (x_d+1) \frac{\Delta x_d}{2} )$  then (see for example \cite{quarteroni} page 270)
for  $f \in C^{k+1}(I)$,
\begin{eqnarray}
|| f - I^X_{N,\Delta x} f ||_{\infty} \le C(N)  \sum_{i=1}^d  \Delta x^{k+1}_i \sup_{x \in [-1,1]^d} |\frac{\partial^{k+1} f}{\partial x_i^{k+1}}|
\label{InterpRegulier}
\end{eqnarray}
When $f$ is only $K$ Lipschitz, using Jackson's theorem we get :
\begin{eqnarray}
|| I^{X}_{N}(f) - f ||_\infty   \le  C K \sup_{i} { \Delta x_i} \sqrt{d} \frac{(1+\lambda_N(X))^d}{N+2}
\label{interpolX}
\end{eqnarray}

\subsection{Some results on Bernstein polynomials}
\label{Annexe3}
The approximation $B_N(f)$  of a function  $f : [0,1] \longrightarrow \R$ is the polynomial 
\begin{eqnarray}
B_N(f)(x) &= & \sum_{i=0}^N f(\frac{i}{N}) P_{N,i}(x)  \mbox{ where } P_{N,i}(x)  = \left( \begin{array}{l} N \\ i \end{array} \right) x^i (1-x)^{N-1}.\nonumber
\end{eqnarray}
It is important to notice that it is not an interpolation. Only points  $0$ and $1$ are interpolated.
By tensorization \cite{Gal}
\begin{eqnarray}
B_{N_1,..,N_d}(f)(x_1,..,x_d) =  \sum_{i_1=0}^{N_1} ... \sum_{i_d=0}^{N_d} \left[  \prod_{j=1}^d P_{N_j,i_j}(x_j) \right] f(\frac{i_1}{N_1},...,\frac{i_d}{N_d}) \nonumber
\end{eqnarray}
By introducing the modulus of continuity 
\begin{eqnarray}
w_1(f,\delta_1,..,\delta_d) & =&  \sup \left \{ |f(x_1,..,x_d) -f(y_1,..,y_d)| ; |x_i-y_i| \le \delta_i, i=1,..d \right\} \nonumber
\end{eqnarray}
we have the following estimation  \cite{Gal}
\begin{eqnarray}
|f(x_1,..,x_d) - B_{N_1,..,N_d}(f)(x_1,..,x_d) | & \le &  C w_1(f, \frac{1}{\sqrt{N_1}},...,\frac{1}{\sqrt{N_d}})            \nonumber
\end{eqnarray}
For a regular function the convergence rate is low 
\begin{eqnarray}
|f(x_1,..,x_d) - B_{N,..,N}(f)(x_1,..,x_d) | & \le & \frac{C}{N} \sum_{i}^d |\frac{\partial^2 f}{\partial x_i^2}(x_1,..,x_d)|.  \nonumber
\end{eqnarray}
The weights associated to this approximation are positive and independent on the function so this operator is monotone. It is known that it preserves
the convexity. Many other approximations with similar properties can be developed  \cite{Anasta}.

\begin{acknowledgements}
Special thank to Nadia Oudjane and St\'ephane Villeneuvre for their  careful reading. Special thanks to Romuald Elie for providing an analytical solution to
test case \reff{stochasticTarget1D}.
\end{acknowledgements}

\end{document}